\newtheorem{theorem}{Theorem}[section]
\newtheorem{lemma}[theorem]{Lemma}
\newtheorem{corollary}[theorem]{Corollary}
\newtheorem{conjecture}[theorem]{Conjecture}
\newtheorem{question}[theorem]{Question}
\newtheorem*{IQLemma}{(Lem.~\ref{lem:EA}) IQ gcd Lemma}
\newtheorem*{QIQTheorem}{(Thm.~\ref{thm:inthull}) Integer Hull Theorem}
\newtheorem*{QRCorollary}{(Cor.~\ref{cor:QR}) Surgery Family Theorem}
\theoremstyle{definition}
\newtheorem{definition}[theorem]{Definition}
\theoremstyle{remark}
\newtheorem{remark}[theorem]{Remark}
\newtheorem{example}[theorem]{Example}
\numberwithin{equation}{subsection}
\def\N{\mathbb N}
\def\Z{\mathbb Z}
\def\R{\mathbb R}
\def\x{{\bf x}}
\def\cl{\textnormal{cl}}
\def\scl{\textnormal{scl}}
\def\Hom{\textnormal{Hom}}
\newcommand{\IQ}{\ensuremath{\mathrm{IQ}}}
\newcommand{\QIQ}{\ensuremath{\mathrm{QIQ}}}
\newcommand{\QR}{\ensuremath{\mathrm{QR}}}
\title{Integer hulls of linear polyhedra and scl in families}
\author{Danny Calegari}
\address{Department of Mathematics \\ Caltech \\
Pasadena CA, 91125}
\email{dannyc@its.caltech.edu}
\author{Alden Walker}
\address{Department of Mathematics \\ Caltech \\
Pasadena CA, 91125}
\email{awalker@caltech.edu}
\date{\today}
\begin{document}

\begin{abstract}
The \emph{integer hull} of a polyhedron is the convex hull of the integer points contained in it. We
show that the vertices of the integer hulls of a rational family of polyhedra of size $O(n)$
have eventually quasipolynomial coordinates. As a corollary, we show that the stable commutator length of 
elements in a surgery family is eventually a ratio of quasipolynomials, and that unit balls in the scl 
norm eventually quasi-converge in finite dimensional surgery families.
\end{abstract}

\maketitle

\section{Introduction}

Integer programming problems are ubiquitous in mathematics, computer science and operations research.
Though integer programming is NP hard in general, Lenstra \cite{Lenstra} famously showed that integer
programming {\em with a fixed number of variables} is polynomially solvable. A family of integer
programming problems associated to a constraint matrix of fixed form but variable entries falls within
the domain of Lenstra's algorithm. A matrix with entries which are functions of some parameter
determines a {\em family} of polyhedra --- each value of the parameter determines the polyhedron
which is the convex hull of the column vectors of the associated matrix. Lenstra's algorithm
shows that one can efficiently decide which of the polyhedra in such a family contain an integer
lattice point.

The {\em integer hull} of a polyhedron is the convex hull of the integer lattice points 
contained in it. The integer hull is itself a polyhedron, and may be described by enumerating its
extremal vectors. There is essentially no relationship between the number of extremal vertices
of a polyhedron and the number of extremal vertices of its integer hull --- even a triangle in
the plane can have an integer hull with arbitrarily many vertices. In fact, even estimating the
number of extremal vertices in the integer hull is an important and very difficult problem; see 
\cite{Zolotykh} for a survey. However in this paper we
show that for families of polyhedra $V(n)$ whose extremal vertices are rational functions of size
$O(n)$ in a single integer variable $n$, the integer hulls themselves form a 
{\em quasipolynomial} family --- i.e.\/ there is an integer $\pi$ so that for each
fixed residue class of $n$ mod $\pi$ the extremal vertices of the integer hulls of $V(n)$
are the columns of a matrix with entries which are (linear) integer polynomials in $n$. 
An example of such a family $V(n)$ is the set of integer dilates of a
fixed rational polyhedron; Ehrhart's theorem (see e.g.\/ \cite{Barvinok}, Chapter~18) 
says that for such a family the {\em number} of integer lattice points is a quasipolynomial, 
and recent generalizations of this theory focus on counting the number of integer points 
contained in families of polyhedra (see \cite{Beck,Chen}).  Instead, we focus on understanding 
the \emph{extreme} integer points; that is, the integer hull.  This point of view 
is obviously related to results in integer programming (e.g. \cite{Sturmfels_Thomas}).  
The \emph{nature} of our results has more in common with integer programming than counting 
lattice points, but the \emph{style} and language of our results and methods have more in 
common with the Ehrhart theory.

Integral polyhedra arise often in low-dimensional topology --- as the unit ball in the
Gromov/Thurston norm on homology (\cite{Thurston_norm}), as the Newton polygon of an 
$A$-polynomial (\cite{Cooper_et_al, Garoufalidis}), as the tropicalization
of a character variety (\cite{Fock_Goncharov}),
as the convex hull of the support on a page of a spectral sequence or of
a collection of monopole spin-$c$ classes (\cite{Juhasz}),
as the parameter space for weights on a train track (\cite{Thurston_diffeomorphisms}), and so on. Short exact sequences connecting such
polyhedral objects arise from Dehn surgery, and the behavior of certain invariants can sometimes
be described in terms of families of integer programming problems. Sometimes some of this data 
can be expressed in terms of {\em quasipolynomials}; for example, Garoufalidis \cite{Garoufalidis}
shows that the degree of the Jones polynomials of certain families of knots are given by
the values of a quadratic quasipolynomial. In this paper we give an application of our theorem to
the $2$-dimensional bounded cohomology of free groups (equivalently, to the {\em stable commutator
length} of free groups).

Stable commutator length measures the {\em simplest} surface (in terms of $-\chi/2$)
that rationally bounds a given homologically trivial $1$-manifold in a given space 
(it has an equivalent algebraic definition in terms of commutators in
the fundamental group of the space; see \S~\ref{scl_background_section}). 
By cutting up a surface into simple pieces,
a surface $S$ may be encoded (potentially in many ways)
as a vector $v(S)$ in a vector space, and the set of all (weighted)
surfaces with prescribed boundary can be encoded as the vectors in a certain rational polyhedral
cone $P$. Typically, many weighted surfaces might represent a given vector, and it is in general very 
difficult to compute the objective function $|v|:=\inf_{v(S)=v} -\chi(S)/2$.
When the target space is a wedge of tori, there is a way of representing surfaces as vectors so
that the function $|v|$ is {\em piecewise rational linear} on the cone of all possible vectors.
More precisely, one forms the {\em sail} --- i.e.\/ the boundary of the convex hull of $D+P$, 
where $D$ is the set of integer lattice points in the interior of certain faces of $P$. 
Then $|v|$ is the sum of a linear term, plus a multiple of the {\em Klein function} --- i.e.\/
the function which is linear on rays
and $1$ on the sail. Certain families of $1$-manifolds --- those arising as a {\em surgery family} ---
give rise to rational linear families of polyhedral cones $P$. Consequently our main theorem implies
that stable commutator length is a {\em ratio of quasipolynomials} on a surgery family.
This is described in detail in \S~\ref{sss_section}.

\subsection{Statement of results}

We now give a more detailed account of the contents of the paper. 

\medskip

In \S~\ref{quasipolynomial_section}
we establish some facts about the behavior of the Euclidean algorithm on quasipolynomials,
and on a slightly more general class of functions $\IQ$, which are the {\em integer valued}
functions of the form $p(n)/C$ where $p(n)$ is an integral quasipolynomial and $C\in \Z$.
The results in this section are quite elementary and probably well known to experts, but they might
be unfamiliar to readers with more of a background in topology or geometric group theory, and therefore
they are included for completeness.

The main result of this section is that the gcd of two $\IQ$'s agrees with
the values of an $\IQ$ of a certain form for $n\gg 0$.

\begin{IQLemma}
Let $\frac{s(n)}{S}, \frac{t(n)}{T} \in \IQ$.  Then there are 
$\frac{a(n)}{d_1}, \frac{b(n)}{d_2} \in \IQ$ such that for $n \gg 0$,
$\gcd\left(\frac{s(n)}{S}, \frac{t(n)}{T}\right) 
= \frac{a(n)}{d_1}\frac{s(n)}{S} + \frac{b(n)}{d_2}\frac{t(n)}{T}$.
\end{IQLemma}

As a corollary (Corollary~\ref{cor:HNF}), we deduce that both the upper triangular matrix and the 
unimodular multiplier in the Hermite normal form of a matrix with entries in $\IQ$ themselves 
have entries eventually in $\IQ$.

\medskip

In \S~3 we study integer hulls of families of polyhedra. A function $S$ from integers to
finite subsets of $\Z^d$ is said to be $\QIQ$ if there is some positive integer $\pi$ so that
for each $0\le a < \pi$ the set $S(\pi n+a)$ is equal to the set of column vectors of 
a matrix with integral polynomial entries (in the variable $n$).  The nomenclature is 
not particularly enlightening: it is intended to evoke something like ``quasi IQ''.

We then state and prove the main theorem of our paper:

\begin{QIQTheorem}
For $1\le i\le k$, let $v_i(n)$ be a vector in $\R^d$ whose coordinates are rational functions
of $n$ of size $O(n)$, and let $V_n = \{ v_i(n) \}_{i=1}^k$. Then for $n \gg 0$, 
the integer hull of $V_n$ is $\QIQ$.
In particular, after passing to a cycle, the coordinates of the vertices of the integer hull of $V_n$
are in $\Z[n]$ for $n\gg 0$.
\end{QIQTheorem}

Here the terminology {\em passing to a cycle} means restricting $n$ to any coset of some 
specific finite index subgroup of $\Z$.

\begin{example}
The definition of $\QIQ$ is awkward because the \emph{number} of extreme integer points 
can vary.  Morally, we want to show that the vertices of the 
integer hulls have coordinates which are elements of $\IQ$; 
but there is no straightforward way to put the vertices of different polyhedra into
families without first passing to a cycle.  Consider the following very 
simple example.  
Let $f_p(n) = \frac{2n+1}{4} = \frac{n}{2} + \frac{1}{4}$ and 
$f_m(n) = \frac{2n-1}{4} = \frac{n}{2} - \frac{1}{2}$, and define the polyhedron 
\[
V_n = \{ (f_m, f_m), (f_p, f_m), (f_p, f_p), (f_m, f_p) \}.
\]
This is a square 
of side length $1/2$ centered at the point $(n/2, n/2)$.  

Then the integer hull of $V_n$ is empty if $n$ is odd, but for $n$ even it consists
of a single point $(n/2,n/2)$. In particular, the number of extremal
points in $V_n$ is periodic, but if we pass to a cycle, the extremal vertices
lie in families indexed by the integers, that have quasipolynomial coordinates
(as a function of the index).
\end{example}

\medskip

Finally, in \S~4 we apply the Integer Hull Theorem to the computation of stable commutator length in {\em surgery families}.
If $G$ is a group, let $B_1^H(G)$ denote the real vector space of formal homologically trivial sums of 
conjugacy classes in $G$ modulo {\em homogenization}, i.e.\/ modulo the relation
$g^n = ng$ for each $g\in G$ and $n\in\Z$.
Let $G=*_i A_i$ and $H=*_i B_i$ be free products of free abelian groups. A collection of
homomorphisms $A_i \to B_i$ determines a homomorphism from $G$ to $H$. A sequence of homomorphisms
$\rho(n):G \to H$ of this form is a {\em surgery family} if for each $i$, the sequence of homomorphisms
from $A_i$ to $B_i$ is an affine map of $\Z$ to the free abelian group $\Hom(A_i,B_i)$. The images $w(n)$
of some fixed rational chain $w\in B_1^H(G)$ under a surgery family of homomorphisms constitute a
{\em surgery family} of chains $w(n) \in B_1^H(H)$.

With this notation, we have the following corollary, which was observed experimentally in certain
cases in \cite{CalegariSSS}:

\begin{QRCorollary}
If $w(n)$ is a surgery family, then $\scl(w(n))$ is eventually a ratio of two quasipolynomials in $n$.
\end{QRCorollary}

One can also consider a surgery family $\rho(n)$ applied to a fixed subspace $V$ of $B_1^H(G)$ of arbitrary
dimension; the images form a family of subspaces of $B_1^H(H)$, and the unit balls in the
$\scl$ norm (in the image) are a family of rational convex polyhedra. Pulling back by $\rho(n)^{-1}$ gives a
family of polyhedra in a fixed vector space $V$. We show that the vertices of these 
polyhedra are eventually quasirational (i.e.\/ ratios of quasipolynomials) and the unit balls themselves
quasiconverge. Corollary~\ref{cor:convergent} gives the precise statement.

\section{Quasipolynomials}\label{quasipolynomial_section}

In this section we introduce (integral) quasipolynomials, and the slightly more general
class $\IQ$ of functions, and establish some basic facts about how
they behave under (coordinatewise) Euclidean algorithm and gcd. These facts are elementary, 
but since they are likely to be unfamiliar to geometric group theorists,
we give full details. Basic references for the theory of quasipolynomials and how 
they arise in lattice point geometry are \cite{Stanley} \S~4.4, and \cite{Beck_Robins}.

\medskip

If $a$ is an integer, and $b$ is a positive integer, we use the notation $a\% b$ to denote the
remainder when dividing $a$ by $b$. That is, $a\%b = a-b\lbrace a/b\rbrace$, where $\lbrace\cdot\rbrace$
denotes fractional part.

\begin{definition}\label{def:quasipolyonimal}
A {\em quasipolynomial} is a function $p:\N\to \Z$ for which there is some least positive integer
$\pi$ and a finite collection of polynomials $p_i(n) \in \Z[n]$ so that 
$p(n) = p_{n\% \pi}(n)$.  We will call $\pi$ the {\em period} of the quasipolynomial.  
\end{definition}

Note that other authors allow quasipolynomials to have arbitrary range, and do not require
the polynomials $p_i(n)$ to be integral (in $\Z[n]$). 
If we need to make the distinction, we refer to the specific
class of functions appearing in Definition~\ref{def:quasipolyonimal} as {\em integral quasipolynomials}.

We are naturally led to enlarge the class of (integral) quasipolynomials slightly.

\begin{definition}
A function $q:\N \to \Z$ is $\IQ$ if it is of the form $q(n) = p(n)/C$,
where $p(n)$ is an integral quasipolynomial, and $C$ is a constant integer
such that $C$ divides $p(n)$ for all $n \ge 0$. The {\em period} of $q$ is the period of $p$.
\end{definition}

We usually write an element of $\IQ$ explicitly as a ratio $p(n)/C$ where by
convention $p(n)$ is an integral quasipolynomial.

The next lemma is elementary but useful, since it shows that after restricting to values of $n$
in some coset of $\pi\Z$, we can think of an element of $\IQ$ just as an integral polynomial 
(in $\Z[n]$).
We call such a restriction {\em passing to a cycle}.

\begin{lemma}
\label{lem:actuallyLinear}
Let $q$ be in $\IQ$. Then there is an integer $\pi$ so that for each integer $a$ the function 
$q(\pi n+a)$ is contained in $\Z[n]$.
\end{lemma}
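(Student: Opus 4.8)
The plan is to reduce an element of $\IQ$ to a genuine polynomial by simultaneously accounting for the quasi-period of the numerator and for the constant denominator. Write $q(n) = p(n)/C$ where $p$ is an integral quasipolynomial of period $\pi_0$, so $p(n) = p_{n \% \pi_0}(n)$ with each $p_i \in \Z[n]$, and $C \mid p(n)$ for all $n$. The first step is to observe that restricting to an arithmetic progression $n = \pi_0 m + r$ (for fixed residue $r$) makes the numerator a single polynomial: $p(\pi_0 m + r) = p_r(\pi_0 m + r)$, which lies in $\Z[m]$ since it is the composition of $p_r \in \Z[n]$ with the affine integral substitution $n \mapsto \pi_0 m + r$. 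So on each such progression $q$ becomes $p_r(\pi_0 m + r)/C$, a polynomial in $\Z[m]$ divided by the fixed integer $C$.

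The second step handles the division by $C$. A polynomial $f(m) \in \Z[m]$ need not have all coefficients divisible by $C$ even when $C \mid f(m)$ for every integer $m$ (e.g. $f(m) = m(m-1)$ with $C = 2$); but it becomes so after a further affine substitution refining the progression. Concretely, I would use the fact that the $\Z$-module of integer-valued polynomials has the binomial coefficients $\binom{m}{j}$ as a basis, or more directly: if $C \mid f(m)$ for all $m \in \Z$, then writing $f$ in the basis $\{\binom{m}{j}\}$ shows $f(m) = \sum_j c_j \binom{m}{j}$ with $c_j \in \Z$, and one checks $c_j = \sum_{\ell=0}^j (-1)^{j-\ell}\binom{j}{\ell} f(\ell)$, which need not be divisible by $C$. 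The cleaner route: substitute $m = C! \, \ell + s$ for each fixed $s$; since $\binom{C!\,\ell + s}{j}$, expanded in $\ell$, has all non-constant coefficients divisible by $C!$ hence by $C$ (as $j \le \deg f$ and $C \mid C!$... more carefully, one takes the substitution modulus to be a suitable multiple so that every nonconstant coefficient of $f(N\ell + s)$ in $\ell$ is divisible by $C$), and the constant term $f(s)$ is divisible by $C$ by hypothesis, we conclude $f(N\ell + s)/C \in \Z[\ell]$. Composing the two substitutions, set $\pi = \pi_0 \cdot N$; then for each residue $a$ mod $\pi$, writing $n = \pi \ell + a$ realizes $q(\pi \ell + a)$ as an element of $\Z[\ell]$, which is the claim (with the roles of the free variable renamed).

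The only mildly delicate point is choosing the substitution modulus $N$ in the second step correctly: one wants the smallest (or any) $N$ such that for every $f \in \Z[m]$ of degree at most $\deg p$ with $C \mid f(m)$ for all $m$, every nonconstant coefficient of $f(N\ell + s)$ in $\ell$ is a multiple of $C$. Taking $N$ to be a large enough multiple of $C$ — for instance $N = C \cdot (\deg p)!$ — suffices, since the $\ell^k$-coefficient of $\binom{N\ell+s}{j}$ is an integer multiple of $N^k / k!$, and $N^k/k!$ is a multiple of $C$ once $N$ is a multiple of $C \cdot k!$ for all relevant $k \le j \le \deg p$. This is a routine divisibility bookkeeping, not a real obstacle; the substance of the lemma is simply the two-stage change of variable, first killing the quasi-period and then clearing the denominator.
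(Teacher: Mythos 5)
Your overall strategy --- restrict $n$ to an arithmetic progression whose modulus is a multiple of both the quasi-period and the denominator $C$, then verify the resulting polynomial has integer coefficients --- is the same as the paper's. The paper does this in a single substitution $n\mapsto CDn+a$ (with $D$ the period), observing that every nonconstant coefficient picks up a factor of $(CD)^k$ and the constant term $p(a)$ is divisible by $C$ by hypothesis; you achieve the same in two stages with $\pi = \pi_0 N$.

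Your Step 2, however, takes a detour through the binomial-coefficient basis that is both unnecessary and technically flawed. After Step 1 you hold $f(m) = p_r(\pi_0 m + r) \in \Z[m]$, so you should expand in monomials: $f(m) = \sum_j c_j m^j$ with $c_j\in\Z$. Then the $\ell^k$-coefficient of $f(N\ell+s)$ equals $\sum_j c_j\binom{j}{k}N^k s^{j-k}$, an integer multiple of $N^k$, so $N=C$ already makes every nonconstant coefficient divisible by $C$, and the constant term $f(s)$ is divisible by $C$ by hypothesis. Your binomial-basis version instead claims that the $\ell^k$-coefficient of $\binom{N\ell+s}{j}$ is an integer multiple of $N^k/k!$, which is false: for $j=2$, $k=1$ that coefficient is $N(2s-1)/2$, and $(2s-1)/2$ is not an integer; likewise the earlier assertion that $\binom{C!\ell+s}{j}$ has all nonconstant $\ell$-coefficients divisible by $C!$ fails already at $C=j=2$. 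The correct expression for the $\ell^k$-coefficient is $N^k e_{j-k}(s,s-1,\dots,s-j+1)/j!$, with $j!$ rather than $k!$ in the denominator. Your final choice $N=C\cdot(\deg p)!$ does happen to rescue the conclusion (since $j!\mid(\deg p)!$ for $j\le\deg p$, a factor of $C^k$ survives), but for a slightly different reason than you state. The monomial-basis argument with $N=C$ is both simpler and correct on the nose, and collapsing the two substitutions into one recovers exactly the paper's proof with $\pi=CD$.
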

\begin{proof}
Let $q(m)=p(m)/C$ where $p(m)$ is quasipolynomial with period $D$.  Set $\pi = CD$, and consider the function $p(nCD+a)/C$ as a function of $n$ for any integer $a$.  Since $p$ has period $D$, there is some (non-quasi-) polynomial $p'$ so that $p(nCD+a)/C = p'(nCD+a)/C$, and we may expand the polynomial over the sum and collect terms containing $nCD$, so 
\[
p'(nCD+a)/C = p''(n) + A/C
\]
Where $p''(n)$ is an integer polynomial (note that every term involving $n$ must contain a $C$, which cancels with the denominator), and $A$ is a constant involving $a$.  Since by assumption $p'(nCD+a)/C$ is an integer, we must have $A/C \in \Z$.  This expresses $p(nCD+a)/C$ as an integral polynomial in $n$, as desired.
\end{proof}

The following example illustrates how functions in $\IQ$ naturally occur:
\begin{example}

Let $a$, $b$, $c$ be positive integers. Then
$$q(n):= \lfloor (an+b)/c \rfloor = \frac{an+b - (an+b)\%c}{c}$$
is in $\IQ$.
\end{example}

The sum of two quasipolynomials is a quasipolynomial, whose period divides the lcm of the periods of
the two terms. Similarly, the product of two quasipolynomials is a quasipolynomial. The same
holds for elements of $\IQ$, so $\IQ$ is a ring.

\begin{lemma}
\label{lem:quotient}
Let $a(n)/C$ and $b(n)/D$ be in $\IQ$, and suppose they take positive values. 
Then there are $q(n)/Q$ and $R(n)/R$ in $\IQ$ so that
for each $n\gg 0$, the integer quotient and remainder of division of $a(n)/C$ by $b(n)/C$ are
$q(n)/Q$ and $r(n)/R$ respectively.
\end{lemma}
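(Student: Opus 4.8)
The plan is to reduce to ordinary polynomials by passing to a cycle, perform polynomial division over $\Q$, and analyze the floor of the resulting expression.

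Since $a(n)/C$ and $b(n)/D$ lie in $\IQ$, they are genuine positive integers for each $n$; their integer quotient is $\lfloor \frac{a(n)D}{b(n)C}\rfloor$ and their remainder is $\frac{a(n)}{C} - \frac{b(n)}{D}\lfloor\frac{a(n)D}{b(n)C}\rfloor$. It therefore suffices to treat the quotient, since the remainder will then lie in $\IQ$ because $\IQ$ is a ring and the remainder is integer valued. By Lemma~\ref{lem:actuallyLinear} there is a $\pi$ so that, restricting $n$ to a fixed coset of $\pi\Z$ and writing the cycle variable again as $n$, both $a(n)/C$ and $b(n)/D$ become elements $P_1(n),P_2(n)$ of $\Z[n]$, each eventually positive (hence with positive leading coefficient), and the quotient we must understand is $\lfloor P_1(n)/P_2(n)\rfloor$.

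Now perform polynomial division over $\Q$: write $P_1 = P_2\, A + B$ with $A,B\in\Q[n]$ and $\deg B < \deg P_2$ (if $\deg P_1 < \deg P_2$ take $A=0$, $B=P_1$; if $P_2$ is constant take $B=0$). Then $P_1(n)/P_2(n) = A(n) + B(n)/P_2(n)$, and since $\deg B < \deg P_2$ the error term $B(n)/P_2(n)$ tends to $0$. Clearing the denominators of the coefficients of $A$ by an integer $L$, we split $A(n) = \widetilde A(n) + \{A(n)\}$, where $\widetilde A(n) = \big((LA)(n) - (LA)(n)\%L\big)/L \in \IQ$ is integer valued and $\{A(n)\} = \big((LA)(n)\%L\big)/L \in [0,1)$ is periodic with period dividing $L$. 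Hence
\[
\left\lfloor \frac{P_1(n)}{P_2(n)} \right\rfloor = \widetilde A(n) + \left\lfloor \{A(n)\} + \frac{B(n)}{P_2(n)} \right\rfloor .
\]
For $n$ in a fixed residue class mod $L$ the quantity $\{A(n)\}$ is a constant $c\in\{0,\tfrac1L,\dots,\tfrac{L-1}{L}\}$; since $B(n)/P_2(n)\to 0$, if $c\in(0,1)$ then the remaining floor is eventually $0$, while if $c=0$ it is eventually $0$ or $-1$ according to the (eventually constant) sign of $B(n)/P_2(n)$. In all cases $\lfloor\{A(n)\}+B(n)/P_2(n)\rfloor$ eventually agrees with a quasipolynomial $\epsilon(n)$ taking values in $\{-1,0\}$, so $\lfloor P_1(n)/P_2(n)\rfloor$ eventually agrees with $\widetilde A(n)+\epsilon(n)\in\IQ$.

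Finally, reassembling the cosets: an integer valued function that on each residue class mod $\pi$ eventually agrees with an element of $\IQ$ in the cycle variable again lies in $\IQ$ (after clearing powers of $\pi$ and combining periods and denominators — this is routine, and only finitely many small values of $n$ are affected, which is harmless since the lemma asks only for agreement for $n\gg 0$). This produces $q(n)/Q\in\IQ$ equal to the integer quotient for $n\gg 0$, and then $r(n)/R := a(n)/C - (b(n)/D)\cdot q(n)/Q$ lies in $\IQ$ and is the remainder for $n\gg 0$. The heart of the argument is the floor analysis of $\{A(n)\}+B(n)/P_2(n)$; the only real care is needed for the edge cases ($P_2$ constant, $\deg P_1<\deg P_2$, the sign of $B/P_2$) and for the denominator bookkeeping in the reassembly step.
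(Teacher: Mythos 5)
Your proof is correct, and it takes a genuinely different route from the paper's. The paper proves Lemma~\ref{lem:quotient} by an explicit double induction (on the degree of the numerator, and within a fixed degree on the size of the leading coefficient), performing one ``long division'' step at a time so that each step visibly reduces a measure of complexity. You instead pass to a cycle, do a single polynomial division $P_1 = P_2 A + B$ over $\Q$ with $\deg B < \deg P_2$, split $A(n)$ into its integer part $\widetilde A(n)\in\IQ$ and the periodic fractional part $\{A(n)\}$, and observe that since $B/P_2\to 0$ the correction $\lfloor\{A(n)\}+B(n)/P_2(n)\rfloor$ eventually stabilizes to a periodic $\{-1,0\}$-valued function on each residue class. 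The floor analysis, the sign-of-$B/P_2$ case split, and the reassembly across residue classes are all sound (the divisibility $C\mid P(n)$ persists to all $n$ because polynomial congruences are periodic). What the paper's inductive route buys is that the degree/leading-coefficient reduction is exhibited directly inside the proof, and the authors reuse precisely that structure to bound the number of steps of the Euclidean algorithm in Lemma~\ref{lem:EA}; your version would require a short extra remark to recover that reduction (e.g.\ from $0\le r(n)/R < b(n)/D$ for $n\gg 0$, one sees the remainder has degree at most that of the divisor, with strictly smaller leading coefficient when degrees are equal). What your version buys is a shorter, more conceptual argument in which the floor function does the bookkeeping that the paper does by hand.
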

\begin{proof}
By passing to a cycle, it suffices to prove this when $a(n)$ and $b(n)$ are
polynomial and eventually positive. 

Assume, therefore, that we are given $\frac{b(n)}{D}$ with $b(n)$ polynomial and eventually positive.  
We will inductively show the existence of a quotient and remainder for divisor $\frac{b(n)}{D}$ for 
any $\frac{a(n)}{C}$ with $a(n)$ polynomial and eventually positive.  First, we may assume that $C=D$, 
since we can find a constant common denominator.  Write $a(n) = a_kn^k + a_{k-1}n^{k-1} + \cdots + a_0$ 
and $b_ln^l + b_{l-1}n^{l-1} + \cdots + b_0$, where by hypothesis $a_k$ and $b_l$ are both
strictly positive.  If $k<l$, then for $n\gg 0$ the quotient is zero 
and the remainder is $\frac{a(n)}{C}$, so we are done.  If $k=l$, there are two cases:  
\begin{enumerate}
\item{If $a_k < b_l$, then the quotient is $0$ and the remainder is $\frac{a(n)}{C}$.}
\item{If $a_k \ge b_l$, then the quotient is $a_k/b_l - e$, where $a_k/b_l$ is the integer quotient, 
and $e$ is $1$ if $a(n)$ is eventually less than $(a_k/b_l)b(n)$, and $0$ otherwise.}
\end{enumerate}
This proves the lemma in the case of polynomials of equal degrees.  

We now induct on the degree $k$ of $a(n)$, and the size of the largest coefficient $a_k$.
Assume that for any $\frac{c(n)}{C}$ with degree less than $k$, or degree equal to $k$ but with 
highest coefficient less than $a_k$, we can write 
$\frac{c(n)}{C} = \frac{q(n)}{Q}\frac{b(n)}{D} + \frac{r(n)}{R}$,
where $\frac{r(n)}{R}$ is eventually less than $\frac{b(n)}{D}$ (i.e.\/ this expression eventually gives
the integer quotient and remainder).

There are two cases: 
\begin{enumerate}
\item{If $a_k > b_l$, then observe that $(a(n) - (a_k/b_l-e)n^{k-l}b(n))/D$, where $e$ is as above, 
has smaller first coefficient than $a(n)$, so by induction we can decompose it into a quotient 
$\frac{q(n)}{Q}$ and remainder $\frac{r(n)}{R}$ by $\frac{b(n)}{D}$.  The quotient of 
$\frac{a(n)}{D}$ by $\frac{b(n)}{D}$ is then $(a_k/b_l-e)n^{k-l} + \frac{q(n)}{Q}$ and the 
remainder is $\frac{r(n)}{R}$.}
\item{If $a_k \le b_l$, then consider the expression
\[
\frac{a(n)}{D} - \frac{a_kn + a_{k-1} - (a_kn+a_{k-1})\%b_l}{b_l} \frac{b(n)}{D}n^{k-l-1}
\]
The first coefficient (after multiplying $\frac{a(n)}{D}$ by $\frac{b_l}{b_l}$ 
to get a common denominator $b_lD$) of this difference is: $b_la_k - b_la_k = 0$, 
so the difference has smaller degree.  By induction, we find a quotient and remainder 
as above, and thus have an expression in $\IQ$ for the quotient and remainder of $\frac{a(n)}{D}$ 
by $\frac{b(n)}{D}$.  Note that as we apply induction in this case, we will need to 
multiply the top and bottom of $\frac{b(n)}{D}$ by $b_l$ to get a common denominator, 
but we will not need to alter the expression being divided, so the first coefficient 
and degree of the expression being divided can never increase.}
\end{enumerate}
The proof follows.
\end{proof}

Since quotient and remainder are in $\IQ$, the Euclidean algorithm lets us express
the gcd of two elements of $\IQ$ in a simple way:

\begin{lemma} 
\label{lem:EA}
Let $\frac{s(n)}{S}, \frac{t(n)}{T} \in \IQ$.  Then there are $\frac{a(n)}{d_1}, \frac{b(n)}{d_2} \in \IQ$ 
such that for $n \gg 0$, $\gcd\left(\frac{s(n)}{S}, \frac{t(n)}{T}\right) 
= \frac{a(n)}{d_1}\frac{s(n)}{S} + \frac{b(n)}{d_2}\frac{t(n)}{T}$.
\end{lemma}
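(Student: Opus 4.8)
The plan is to run the extended Euclidean algorithm on the pair $f_0 := \frac{s(n)}{S}$, $f_1 := \frac{t(n)}{T}$, keeping track of two things: that every quantity produced stays in the ring $\IQ$, and that the algorithm halts after a number of steps that does not depend on $n$. After passing to a cycle and, if necessary, replacing $f_0$ or $f_1$ by its negative, we may assume both are eventually positive, so that $\gcd(f_0(n), f_1(n))$ is defined for $n \gg 0$ (the cases where one of them is eventually $0$ are trivial: the gcd is then the other function, with obvious B\'ezout coefficients). Recursively let $f_{i+1}$ be the integer remainder of dividing $f_{i-1}$ by $f_i$; by Lemma~\ref{lem:quotient} there is a fixed $q_i \in \IQ$, the integer quotient, with $f_{i-1} = q_i f_i + f_{i+1}$ and $0 \le f_{i+1}(n) < f_i(n)$ for $n \gg 0$. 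At the same time set $a_0 = 1$, $b_0 = 0$, $a_1 = 0$, $b_1 = 1$ and $a_{i+1} = a_{i-1} - q_i a_i$, $b_{i+1} = b_{i-1} - q_i b_i$; since $\IQ$ is a ring, these lie in $\IQ$ and satisfy $f_i = a_i f_0 + b_i f_1$, with denominators and periods bounded in terms of those of the $q_i$ used.

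The crux --- and the one step that is more than bookkeeping --- is a bound on the number of steps that is uniform in $n$. For each fixed large $n$ the algorithm halts because the positive integers $f_i(n)$ strictly decrease, but a priori the number of steps could grow with $n$ (as $\log n$, say). To prevent this, observe first that $\deg f_{i+1} \le \deg f_i$: the integer division in Lemma~\ref{lem:quotient}, like ordinary polynomial division, never raises the degree (and a remainder of strictly larger degree would in any case violate $f_{i+1}(n) < f_i(n)$). So the degree is non-increasing, and hence strictly decreases at most $\deg f_0$ times. On a maximal block of consecutive indices on which $\deg f_i$ equals a constant $d$, each quotient $q_i$ is a constant, and the cases $k=l$ of Lemma~\ref{lem:quotient} show that when the correction term $e$ is $0$ the integer leading coefficient of $f_{i+1}$ is exactly the remainder of dividing that of $f_{i-1}$ by that of $f_i$ --- so along the block the leading coefficients run through the ordinary integer Euclidean algorithm. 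The only case that could enlarge a leading coefficient is $e = 1$; but then the degree cannot remain $d$ with a larger leading coefficient without violating $f_{i+1}(n) < f_i(n)$, so the degree is forced to drop within a step or two. Hence each constant-degree block has length bounded in terms of the sizes of the leading coefficients that occur, and those sizes are in turn bounded by a quantity that does not depend on $n$; therefore the total number of steps $N$ is bounded independently of $n$.

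Finally, the sequence of cases invoked from Lemma~\ref{lem:quotient} is determined by the quasipolynomial data of $f_0$ and $f_1$, not by the particular value of $n$, so there is an $n_0$ such that for every $n \ge n_0$ the algorithm executes the same $N$ steps and terminates with $f_{N+1}(n) = 0$ and $f_N(n) = \gcd(f_0(n), f_1(n))$. Taking $\frac{a(n)}{d_1} := a_N$ and $\frac{b(n)}{d_2} := b_N$, which lie in $\IQ$ by construction, yields $\gcd\!\left(\frac{s(n)}{S}, \frac{t(n)}{T}\right) = \frac{a(n)}{d_1}\frac{s(n)}{S} + \frac{b(n)}{d_2}\frac{t(n)}{T}$ for $n \gg 0$, as required. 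To reiterate, the one real obstacle is the uniform-in-$n$ bound on the length of the Euclidean sequence; granting that, the ring structure of $\IQ$ together with Lemma~\ref{lem:quotient} finishes the proof.
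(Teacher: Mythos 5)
Your proof is correct and follows essentially the same approach as the paper's: apply Lemma~\ref{lem:quotient} at each step to keep every intermediate quantity in $\IQ$, and bound the number of Euclidean steps uniformly in $n$ by observing that degrees are non-increasing and that, within a block of constant degree, the leading coefficients (with the denominator held fixed) decrease as in the ordinary integer Euclidean algorithm. Your slightly more explicit handling of the $e=1$ correction case and of the B\'ezout-coefficient bookkeeping fills in details the paper leaves implicit, but the underlying termination argument --- tracking the lexicographic complexity (degree, leading coefficient) --- is the same one the paper uses.
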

\begin{proof}
By Lemma \ref{lem:quotient}, each step of the Euclidean algorithm has output which 
is eventually $\IQ$ if the input is. So all we need to do is show that there is an 
upper bound on the number of steps in the Euclidean algorithm, which is 
independent of $n$.

We write:
\begin{align*}
\frac{s(n)}{S} & =  \frac{q_1(n)}{Q_1} \frac{t(n)}{T} + \frac{r_1(n)}{R_1} \\
\frac{t(n)}{T} & =  \frac{q_2(n)}{Q_2} \frac{r_1(n)}{R_1} + \frac{r_2(n)}{R_2} \\
\frac{r_1(n)}{R_1} & =  \frac{q_3(n)}{Q_3} \frac{r_2(n)}{R_2} + \frac{r_3(n)}{R_3} \\
\cdots            & 
\end{align*}
By the proof of Lemma \ref{lem:quotient}, either the degree of $r_1(n)$ is 
smaller than the degree of $s(n)$, or the degrees are possibly the same, 
but the denominator $R_1$ is the same integral polynomial as $S$ and 
the leading coefficient of $r_1(n)$ is less than the leading coefficient 
of $s(n)$.  Thus, by taking two steps of the Euclidean algorithm, we have 
exchanged $\frac{s(n)}{S}$ for something strictly simpler. This process must 
terminate, because although the denominator can increase, it can do 
so only if the degree decreases, and this can happen only finitely many 
times.  Therefore, there is a universal upper bound on the length of 
the Euclidean algorithm, and we can find a finite expression for 
the $\gcd$.
\end{proof}

We recall the definition of Hermite normal form (see e.g.\/ \cite{Cohen}, \S~2.4). A (not necessarily
square) integral matrix is in {\em Hermite normal form} if it is upper triangular, and if the
entries are all non-negative and, in each column, maximized on the diagonal (strictly maximized if
nonzero). If $M$ is an arbitrary integral matrix, there is a unimodular integral
matrix $U$ so that $M=UB$ and $B$ is in Hermite normal form; $B$ is unique (given $M$),
though $U$ might not be. 

\begin{remark}
In fact, there is some ambiguity in what is meant by finding the HNF of a matrix. It is more
usual to find a factorization $M=BU$ where $B$ is HNF and $U$ is unimodular. We are usually
interested in applying HNF to an $n\times m$ matrix of rank $m$ where $m\le n$.
\end{remark}

\begin{corollary}\label{cor:HNF}
Both the upper triangular matrix and the unimodular multiplier in the Hermite normal form 
of a matrix with entries in $\IQ$ have entries eventually in $\IQ$.
\end{corollary}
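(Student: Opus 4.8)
The plan is to run the standard algorithm that produces the Hermite normal form of an integral matrix, but to carry it out symbolically with entries ranging over $\IQ$, tracking that every intermediate quantity stays in $\IQ$. Recall that the HNF of a matrix $M$ is computed column by column: within each column one uses the Euclidean algorithm on pairs of entries to zero out all but one of them (accumulating the corresponding elementary row operations into the unimodular multiplier $U$), and then one reduces the entries to the left of each pivot modulo the pivot. Both of these are built entirely out of the operations $+$, $-$, integer multiplication, integer quotient, remainder, and gcd. Since $\IQ$ is a ring (as observed in the excerpt after Lemma~\ref{lem:actuallyLinear}), and since Lemma~\ref{lem:quotient} shows integer quotient and remainder of two elements of $\IQ$ are eventually in $\IQ$, while Lemma~\ref{lem:EA} shows the gcd together with its B\'ezout cofactors are eventually in $\IQ$, each individual step of the algorithm takes $\IQ$ input to $\IQ$ output for $n \gg 0$.

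The key steps, in order, are: (1) fix a cycle (a coset of $\pi\Z$) large enough that, by Lemma~\ref{lem:actuallyLinear}, every entry of $M$ is an honest integral polynomial in $n$; it suffices to prove the claim after passing to such a cycle, since finitely many cycles cover $\N$. (2) Argue that the \emph{combinatorial shape} of the HNF algorithm --- which entries are zero, which pairs get Euclidean-reduced against which, how many elimination steps occur in each column --- is eventually constant in $n$. This is where one must be slightly careful: the leading behavior (degree and sign of leading coefficient) of each intermediate $\IQ$ is eventually fixed, so the branch taken at each comparison (e.g.\ ``is this entry zero?'', ``which entry has smaller Euclidean size?'') stabilizes for $n\gg 0$; here one uses that the total number of Euclidean steps is bounded independently of $n$, exactly the point established in the proof of Lemma~\ref{lem:EA}. (3) Having fixed the shape, observe that the algorithm is now a fixed finite composition of ring operations and of the quotient/remainder/gcd operations, so by induction on the number of operations the final matrix $B$ and the accumulated multiplier $U$ have all entries in $\IQ$ for $n \gg 0$. (4) Note that the unimodularity of $U$ and the HNF property of $B$ hold for every $n$ by construction, so no extra verification is needed there.

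The main obstacle is step (2): ensuring that the discrete choices made by the algorithm do not oscillate with $n$. The subtlety is that an $\IQ$ can change sign or vanish identically on some residue classes, so one genuinely needs to pass to a cycle first and then invoke the eventual stabilization of leading terms; one also needs the uniform bound on the number of Euclidean steps so that the algorithm does not secretly have unboundedly many branch points as $n$ grows. Once the shape is pinned down, the rest is a routine induction using that $\IQ$ is closed under the relevant operations. A minor bookkeeping point worth stating explicitly is that the denominators appearing in the $\IQ$ expressions may grow as the algorithm proceeds, but only finitely often and in a controlled way --- this is harmless since ``eventually in $\IQ$'' imposes no bound on the denominator, and the composition is of fixed finite length.
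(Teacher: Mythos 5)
Your proof is correct and takes essentially the same approach as the paper, which simply observes that the row reduction and gcd computations in the HNF algorithm make sense over the ring of functions eventually in $\IQ$, citing Lemma~\ref{lem:EA}. Your steps (2) and (3) spell out the stabilization of the algorithm's branching structure and the bounded number of Euclidean steps, which the paper leaves implicit but which are indeed the points one must check.
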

\begin{proof}
The row reduction and $\gcd$ calculations in the algorithm to reduce to Hermite normal form 
(see \cite{Cohen}) make sense over the ring of functions which are $\IQ$ for $n\gg 0$,
by Lemma~\ref{lem:EA}.
\end{proof}

\section{Integer Hulls}

This section is the technical heart of the paper. The main result is Theorem~\ref{thm:inthull},
which says that certain families of polyhedra have integral hulls whose extremal vertices
have coordinates which are in $\IQ$ for $n\gg 0$. 
To state this precisely is a bit fiddly, since the number
of extremal vertices of the integral hull is itself variable.

We stress that the results we prove in this section generally hold {\em for all sufficiently large values}
of an integer parameter $n$. By abuse of notation, we will sometimes use the terms
integer polynomial or $\IQ$ as shorthand, when we really mean a function of $n$ which
agrees with some integer polynomial or $\IQ$ for $n\gg 0$. Generally the meaning should be
clear from the context, but we add the caveat ``for $n\gg 0$'' whenever there is the possibility
of ambiguity.

\begin{definition}
Given a finite set of points $P$ in $\R^d$, the \emph{integer hull} of $P$ is the convex hull 
of the integral points contained in the convex hull of $P$.
\end{definition}
Note that there is no assumption that the convex hull of $P$ has full dimension.

\begin{definition}
\label{def:QIQ}
A family of integer hulls $S(n)$ in $\R^d$ is $\QIQ$ if there is some integer $\pi$ 
so that for all integers $i$, the vertices of $S(\pi n+i)$ are the columns of a 
matrix whose entries are integer polynomials in the variable $n$, for $n\gg 0$.
\end{definition}

Note that for each $i$ the matrix whose columns are the vertices of $S(\pi n+i)$ is an 
$m_i\times d$ matrix, where $m_i$ depends on $i \mod \pi$. 
In the sequel we use the notation $S(n)=\{p_i(n)\}_{i=1}^k$
to mean that $S(n)$ is a set of $k$ vectors whose coordinates are functions of $n$ (taking
values in some ring; e.g.\/ they might be integral polynomials, $\IQ$, rational functions, etc.).
Note in this example that $k$ might depend on the period $\pi$, but in our notation we
implicitly pass to a cycle in which $k$ is constant.

\begin{lemma}
\label{lem:convHull}
Let $S(n) = \{p_i(n)\}_{i=1}^k$ and $R(n) = \mathrm{cone}(\{r_i(n)\}_{i=1}^m)$ where the vectors
$p_i(n)$ and $r_i(n)$ have coordinates which are rational functions of $n$.  
Then for $n\gg 0$, the vertices of the convex hull of $S(n) + R(n)$ are $\{p_j(n)\}$ for some
specific fixed subset of indices $j$.
\end{lemma}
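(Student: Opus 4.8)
The plan is to fix a cycle (so that all the coordinate functions become honest rational functions of $n$) and then argue that the combinatorial type of the polyhedron $\mathrm{conv}(S(n)+R(n))$ stabilizes for $n\gg 0$. Concretely, a point $p_i(n)$ fails to be a vertex of $\mathrm{conv}(S(n)+R(n))$ precisely when it lies in the convex hull of the other $p_j(n)$ together with the cone $R(n)$; by Carath\'eodory's theorem (the conical version), this happens iff there is a choice of at most $d+1$ of the generators $p_j(n)$ and $r_l(n)$, and nonnegative coefficients, expressing $p_i(n)$ as an appropriate convex/conical combination. For each fixed finite subset of indices, the existence of such nonnegative coefficients is governed by a linear system whose coefficients are rational functions of $n$; its solvability (in the nonnegative orthant) is a statement in the first-order theory of real-closed fields with parameter $n$, and hence — since rational functions of $n$ have eventually constant sign — the truth value of ``$p_i(n)$ is a vertex'' is eventually constant in $n$. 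Thus there is a fixed subset $J$ of indices such that for $n\gg 0$ the vertex set of $\mathrm{conv}(S(n)+R(n))$ is exactly $\{p_j(n)\}_{j\in J}$.

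The key steps, in order, are: (1) reduce to a single residue class so the coordinates are rational functions of $n$; (2) for each index $i$ and each size-$\le d+1$ subset $B$ of the generating set, write down the linear feasibility system ``$p_i(n)\in\mathrm{conv}(\{p_j(n):j\in B\})+\mathrm{cone}(\{r_l(n):l\in B\})$'' and observe that its solvability is eventually constant in $n$ because solvability of a parametrized linear system over $\R_{\ge 0}$ can be detected by finitely many determinantal sign conditions on the parameters, each of which is a rational function of $n$; (3) take $n$ large enough that all these finitely many sign conditions have stabilized; (4) conclude that the set of $i$ for which $p_i(n)$ is \emph{not} a vertex is eventually a fixed set, and let $J$ be its complement. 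Note we also need that none of the $p_j(n)$ for $j\in J$ coincide for $n\gg 0$ or degenerate, which again follows since $p_j(n)-p_{j'}(n)$ is a nonzero rational function (or identically zero, in which case we simply discard the duplicate) and so has eventually constant sign pattern in its coordinates.

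The main obstacle is step (2): making precise the claim that feasibility of a linear system with rational-function entries is eventually constant in $n$. The cleanest route is to invoke the quantifier-elimination / Tarski--Seidenberg principle: ``$\exists \lambda\ge 0$ solving the system'' is a first-order formula over the ordered field of rationals with a single free parameter $n$, hence equivalent to a boolean combination of polynomial inequalities in $n$; evaluated along $n\in\Z$ these are eventually sign-constant. Alternatively, one can avoid logic and argue directly via Farkas' lemma: infeasibility of $Ax=b,\ x\ge 0$ is witnessed by a vector $y$ with $y^\top A\ge 0$, $y^\top b<0$, and one can bound the bit-size / degree of a witness, so a witness that works for infinitely many $n$ can be chosen with entries that are rational functions of $n$, giving eventual infeasibility; symmetrically for feasibility one uses a basic feasible solution, whose coordinates are ratios of minors of $[A\mid b]$, i.e. rational functions of $n$, and whose nonnegativity is an eventually-constant sign condition. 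Either way the conclusion is that for $n\gg 0$ the vertex set of $\mathrm{conv}(S(n)+R(n))$ is $\{p_j(n)\}_{j\in J}$ for a fixed index set $J$, which is exactly the statement of the lemma.
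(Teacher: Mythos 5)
Your proof is correct but takes a genuinely different route from the paper's. The paper's argument is algorithmic and quite terse: it observes that running QuickHull on the $p_i(n)$ and $r_l(n)$ makes a sequence of decisions, each of which compares quantities that are rational functions of $n$; since such comparisons have eventually constant outcomes, the entire decision tree (and hence the output vertex set) stabilizes for $n\gg 0$. Your argument is structural rather than algorithmic: you characterize ``$p_i(n)$ is not a vertex'' as membership in $\mathrm{conv}(\{p_j:j\ne i\})+\mathrm{cone}(\{r_l\})$, reduce via Carath\'eodory to finitely many linear feasibility systems in at most $d+1$ generators, and then argue that feasibility of each parametrized system is eventually constant (via basic feasible solutions/Farkas or via quantifier elimination). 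Both arguments ultimately rest on the same kernel fact --- a finite boolean combination of sign conditions on rational functions of $n$ has eventually constant truth value --- but yours is more self-contained and avoids having to spell out how QuickHull is adapted to handle a set of recession rays in addition to points, which the paper leaves implicit. Two small remarks: your step (1), passing to a cycle, is unnecessary here, since the lemma already assumes the coordinates are honest rational functions of $n$ (no quasipolynomials appear); and in step (2) it would be cleaner to let $B$ range over pairs of disjoint index subsets, one for the $p_j$ and one for the $r_l$, of total size at most $d+1$, rather than a single subset. Neither affects correctness.
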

\begin{proof}
Run the algorithm QuickHull (see \cite{quickhull} for a precise description)
on the points in $S(n)$ plus the rays in $R(n)$.  This algorithm repeatedly sorts points
based on their distances from hyperplanes constructed at each stage; with rational function
input, these hyperplanes have coordinates which themselves are rational functions.
The order of a set of distances of points whose coordinates are
rational functions to hyperplanes defined by rational functions 
eventually stabilizes, so the decision tree as the algorithm runs eventually stabilizes 
(and thus picks out a fixed subset of the $p_i$ as vertices).
\end{proof}

\begin{lemma}
\label{lem:intersection}
Let $H$ be a constant vector with rational coordinates, let $c(n)$ be a rational function of $n$ of size
$O(n)$, and let $d(n)$, $e(n)$ be vectors whose coordinates are rational functions of $n$ of size $O(n)$. 
Define a family of hyperplanes by the constraints
$H\cdot \x = c(n)$. The intersection of this hyperplane with a line segment of the form
$d(n) + t\cdot e(n)$ for $t \in [0,1]$ is either eventually empty, or is eventually a
rational point whose coordinates are rational functions of $n$ of size $O(n)$.
\end{lemma}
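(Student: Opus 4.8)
The plan is to solve for the parameter $t$ explicitly and then check that the resulting point lies in the segment. Substituting the parametrization $\x = d(n) + t\cdot e(n)$ into the hyperplane equation $H\cdot\x = c(n)$ gives the scalar equation $H\cdot d(n) + t\,(H\cdot e(n)) = c(n)$, i.e.
\[
t\,\bigl(H\cdot e(n)\bigr) = c(n) - H\cdot d(n).
\]
Both sides are rational functions of $n$ (the left coefficient $H\cdot e(n)$ and the right side $c(n)-H\cdot d(n)$ are rational in $n$, of size $O(n)$ since $H$ is a constant vector and the other data are $O(n)$). There are two cases according to whether $H\cdot e(n)$ is the zero rational function or not.

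First I would dispose of the degenerate case: if $H\cdot e(n)\equiv 0$, then either $c(n)-H\cdot d(n)\equiv 0$, in which case the whole segment lies in the hyperplane and the intersection is the segment itself (this sub-case should be excluded or handled by noting that the statement asks about ``a rational point'', so we may assume the segment is not contained in the hyperplane — or we note the conclusion still holds by taking the endpoint $d(n)$), or $c(n)-H\cdot d(n)$ is a nonzero rational function, which is eventually nonzero, so the equation has no solution and the intersection is eventually empty. Second, in the main case $H\cdot e(n)\not\equiv 0$, it is eventually nonzero, so for $n\gg 0$ we may divide and obtain the unique solution
\[
t(n) = \frac{c(n) - H\cdot d(n)}{H\cdot e(n)},
\]
a rational function of $n$. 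The point of intersection with the full line is then $d(n) + t(n)\,e(n)$, whose coordinates are rational functions of $n$ (products and sums of rational functions), and of size $O(n)$ since $t(n)$ is a ratio of two $O(n)$ rational functions hence bounded, while $d(n), e(n)$ are $O(n)$.

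Finally I would handle the membership condition $t(n)\in[0,1]$. The inequalities $t(n)\ge 0$ and $t(n)\le 1$ each compare a rational function of $n$ to a constant; the sign of a rational function is eventually constant (determined by the leading coefficients of numerator and denominator), so each inequality either holds for all $n\gg 0$ or fails for all $n\gg 0$. If both eventually hold, the intersection is eventually the rational point $d(n)+t(n)e(n)$ with coordinates as claimed; if either eventually fails, the intersection with the segment is eventually empty. I expect the only mildly delicate point to be the bookkeeping around the degenerate case $H\cdot e(n)\equiv 0$ with $c(n)-H\cdot d(n)\equiv 0$ — i.e.\ deciding how to phrase the conclusion when the segment lies inside the hyperplane; but in the application only the generic behavior is needed, and in any case one can simply record the endpoint as the ``rational point'' in that event, so no real obstacle arises. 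Everything else is the routine observation that rational functions have eventually constant sign and eventually nonzero values unless identically zero.
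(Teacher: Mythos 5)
Your proof is correct and takes essentially the same approach as the paper: solve the scalar equation for $t$, observe it is a rational function of $n$, and use the eventual stability of rational functions (sign and boundedness) to conclude the intersection is eventually empty or eventually a point with $O(n)$ rational coordinates. You are somewhat more careful than the paper about the degenerate case $H\cdot e(n)\equiv 0$ and about the precise comparison $t(n)\in[0,1]$, but the substance is identical.
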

\begin{proof}
Solving for $t$ gives
$$H \cdot (d(n) + t e(n)) = c(n) \quad \text{ or equivalently, } \quad t = \frac{c(n) - H\cdot d(n)}{H\cdot e(n)}$$
Thus, $t$ is a rational function of $n$.  If it is anything larger than $O(1)$, it is eventually outside the interval $[0,1]$, and the intersection is empty.  If it is $O(1)$, then the intersection is $O(n)$, as desired.
\end{proof}

We are now in a position to prove our main theorem. 

\begin{theorem}
\label{thm:inthull}
For $1\le i\le k$, let $v_i(n)$ be a vector in $\R^d$ whose coordinates are rational functions
of $n$ of size $O(n)$, and let $V_n = \{ v_i(n) \}_{i=1}^k$. Then for $n \gg 0$, 
the integer hull of $V_n$ is $\QIQ$.
In particular, after passing to a cycle, the coordinates of the vertices of the integer hull of $V_n$
are in $\Z[n]$ for $n\gg 0$.
\end{theorem}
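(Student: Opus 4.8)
The plan is to induct on the dimension $d$, using a sweep-hyperplane argument to reduce the $d$-dimensional integer hull to a bounded family of lower-dimensional integer hulls whose defining data is still a rational family of size $O(n)$. First I would set up the induction: in dimension $0$ or $1$ the integer hull is controlled directly (a rational interval of size $O(n)$ has endpoints which are floors and ceilings of $O(n)$ rational functions, hence in $\IQ$ by the example after Lemma~\ref{lem:actuallyLinear}, after passing to a cycle). For the inductive step, I would first normalize the ambient picture: by Corollary~\ref{cor:HNF} applied to a matrix spanning the affine span of the $v_i(n)$, we may change coordinates by a family of unimodular transformations with entries eventually in $\IQ$ so that the affine hull of $V_n$ becomes a coordinate subspace; this reduces to the case where $\mathrm{conv}(V_n)$ is full-dimensional in $\R^d$. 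Then I would apply Lemma~\ref{lem:convHull} to pin down, for $n \gg 0$, which of the $v_i(n)$ are actually vertices and to get the combinatorial structure (face lattice) of $P_n := \mathrm{conv}(V_n)$ as a fixed combinatorial polytope, with each facet supported on a hyperplane $H_j \cdot \x = c_j(n)$ where $H_j$ is a fixed rational vector and $c_j(n)$ is a rational function of size $O(n)$ (the facet hyperplanes are determined by $d$ of the vertices, so Cramer's rule gives this).

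The heart of the argument is the sweep. Pick a generic fixed integral linear functional $\ell$; the values $\ell(v_i(n))$ are rational functions of size $O(n)$, and for $n \gg 0$ their ordering stabilizes, so $\ell$ attains its min and max on $P_n$ at fixed vertices $v_{\min}(n), v_{\max}(n)$ with $\ell$-values $m(n) \le M(n)$, both $O(n)$ rational functions. The integer hull $I(P_n)$ is the convex hull of the integer points in the slabs $\{\ell = s\}$ for integers $s$ with $m(n) \le s \le M(n)$. For each such $s$, the slice $P_n \cap \{\ell = s\}$ is a $(d-1)$-dimensional polytope whose vertices (by Lemma~\ref{lem:intersection}) are either original vertices or intersections of edges of $P_n$ with the hyperplane $\{\ell = s\}$ — rational points whose coordinates are rational functions of size $O(n)$ in \emph{two} parameters, $n$ and $s$. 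Writing $s$ in a residue class mod some modulus and as $s = \alpha n + \beta$ (or more carefully, ranging $s$ over the integers in $[m(n), M(n)]$), each slice is itself a rational family of size $O(\cdot)$ in the combined parameter; by the inductive hypothesis its integer hull is $\QIQ$ in that parameter. The key point is that there are only $O(n)$ slices, but they are \emph{uniformly} described: there is a fixed modulus $\pi'$ and fixed polynomials so that the vertices of the integer hull of the slice at height $s$ are given by $\IQ$-type formulas in $(n,s)$ depending only on $s \bmod \pi'$ and on which "edge interval" $s$ falls into (the edges of $P_n$ partition $[m(n),M(n)]$ into $O(1)$ subintervals with rational endpoints of size $O(n)$, by Lemma~\ref{lem:convHull} applied in a sweep).

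Finally I would reassemble: $I(P_n)$ is the convex hull of the union, over all integer heights $s$, of finitely many uniformly-described slice integer hulls. Its vertices occur only at the "extreme" heights of each edge-subinterval — that is, at $s = \lceil \text{left endpoint}\rceil$ and $s = \lfloor \text{right endpoint}\rfloor$ of each of the $O(1)$ subintervals (a vertex of the total hull cannot lie at an interior integer height of a subinterval, since there it would be a strict convex combination of the points directly above and below it in adjacent slices — this uses that within a subinterval the slice vertices move affinely). There are thus only $O(1)$ candidate heights $s$, each an $\IQ$ function of $n$ after passing to a cycle (ceiling/floor of $O(n)$ rationals), and at each such height the slice integer hull is $\QIQ$ by induction; taking the convex hull of this bounded collection of $\QIQ$ families, and invoking Lemma~\ref{lem:convHull} once more to discard non-vertices, gives that $I(P_n)$ is $\QIQ$. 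The last sentence of the theorem then follows from Lemma~\ref{lem:actuallyLinear}.

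The main obstacle I expect is the bookkeeping in the sweep: ensuring that the slice polytopes really do form a \emph{single} rational family in a combined parameter of controlled size, rather than $O(n)$ unrelated families — in particular, handling the interaction between the residue of the height $s$ (needed for the $\IQ$ structure of floors and ceilings) and the residue of $n$, and checking that "size $O(n)$" is preserved when $s$ itself ranges up to $O(n)$ so that coordinates that are, say, $s^2/n$ or $s/n$ stay $O(n)$. Getting the vertices of the total hull to be located only at $O(1)$ many heights — i.e. the claim that no vertex hides at an interior integer height of an edge-subinterval — is the other point that needs genuine care, and is where the affine dependence of slice data on $s$ within a subinterval is essential.
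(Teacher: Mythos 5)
The central claim of your sweep argument --- that a vertex of the total integer hull cannot lie at an ``interior integer height of a subinterval'' --- is false, and there is a concrete two-dimensional counterexample. Take the triangle with vertices $(1/3,\,1/3)$, $(n+1/3,\,1/3)$, $(n+1/3,\,2n+4/3)$, and sweep by the vertical lines $x=s$; the only edge-subinterval is $[1/3,\,n+1/3]$. The upper facet has slope $2+1/n$, so the top of the slice at integer $x=s$ is $\lfloor 2s - 1/3 + s/n - 1/(3n)\rfloor$, which equals $2s-1$ for $3s\le n$ and $2s$ for $3s>n$. The upper hull therefore has a vertex at $s=\lfloor n/3\rfloor+1$, nowhere near the subinterval endpoints. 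The reason your ruling argument breaks is that the ``period'' governing the floor is the denominator of the facet slope, which is $n$ itself, not a constant; so the slice integer hull undergoes a secular phase transition at an $s$ that moves with $n$, rather than only jumping at fixed residue classes of $s$. Closely related is the other soft spot you flagged: the slices form a genuinely two-parameter family in $(n,s)$ with $s$ ranging up to $O(n)$, and there is no version of the theorem for a single combined parameter to invoke as an inductive hypothesis --- the $\QIQ$ period would have to be a fixed integer, but here it must scale with $n$.

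The paper's proof sidesteps the sweep entirely and handles exactly this phenomenon. After reducing to a simplex and using HNF to put the linear part in triangular form, the two cases are: if some row of the normalized matrix is $O(1)$, the integer points lie in boundedly many affine hyperplanes, so one can induct on dimension; if the simplex is full-dimensional, the $O(n)$ hypothesis guarantees an integer point $p_i$ at \emph{uniformly bounded} distance from each vertex $v_i$, and the ``shell'' between $\mathrm{conv}\{p_i\}$ and $\mathrm{conv}\{v_i\}$ is subdivided (via a prism decomposition, as in Hatcher's proof of simplicial approximation) into simplices each of which contains a pair $p_i,v_i$ at bounded distance --- so each shell simplex has degenerate linear part, falling back into the first case. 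The vertex you would miss at $s\approx n/3$ in the counterexample is exactly a vertex contributed by one of these thin shell pieces. The decisive mechanism is reducing to $O(1)$ many lower-dimensional problems via the shell, not via $O(n)$ many slices.
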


Note that the last claim follows from the first, since after passing to a cycle,
a function in $\IQ$ of size $O(n)$ is integral linear.

Before proving this theorem, we explain the main idea. By subdivision, it suffices
to prove the theorem when the convex hull of $V_n$ is a simplex. Then the proof proceeds
by induction. Roughly speaking, there are two cases. In the first case, the 
simplex is either almost degenerate or not full dimensional.  In this case, every integer point in 
$V_n$ is contained in one of finitely many affine hyperplanes. We reduce to the intersection
of the simplex with one of these hyperplanes, and induct on the dimension.  
In the second case, the simplex is full-dimensional, 
and we show that for each vertex $v_i$, there is an integer point $p_i$ inside the simplex and
uniformly close to $v_i$.  The convex hull of the $p_i$ contains ``almost'' every integer point 
in $V_n$, and all that remains to find the rest is to slice the thin ``shell'' between 
the convex hull of the $p_i$ and the simplex $V_n$ (see Figure~\ref{fig:decomp} for a 
preview).  In this case, too, we reduce the problem to a finite number of lower-dimensional 
polyhedra and induct.  It is worth noting that the base case of a single point 
is nontrivial and requires Lemma~\ref{lem:quotient}.

This bears more than a superficial similarity to Lenstra's algorithm in \cite{Lenstra}, mentioned in
the introduction. That algorithm takes as input a polyhedron with a fixed number of vertices 
and decides whether the integer hull is empty or not. The algorithm proceeds by either showing 
that such a vertex must exist (if some easily verified inequalities are satisfied) or reduces 
the search to the intersection of the polyhedron with finitely many affine hyperplanes.

\begin{proof}
Since the coordinates of each $v_i$ are rational functions of $n$, Lemma~\ref{lem:convHull} implies
that the vertices of the convex hull of $V_n$ are given by some fixed subset of the $v_i$.  
So after passing to a subset if necessary, we assume that the $v_i(n)$ are precisely the
vertices of the convex hull of $V_n$. 

We make the further simplification of decomposing $V_n$ into simplices whose vertices are the $v_i(n)$.
The integer hull of $V_n$ is the convex hull of the union of the integer hulls of these simplices.
Hence if we can show that the integer hull of each such simplex is $\QIQ$, then the integer hull
of $V_n$ is $\QIQ$, by another application of Lemma~\ref{lem:convHull}. So without loss of
generality we may assume that $V_n$ is simplicial; i.e.\/ $k$ is one greater than the dimension
of the convex hull of $V_n$. Furthermore, $k-1\le d$, since $V_n$ is $k-1$ dimensional.

By abuse of notation, we let $V_n$ denote both the polyhedron with vertices $v_i(n)$, and
the matrix with columns $v_i(n)$. We use matrix notation for the components of each vector $v_i(n)$;
hence $v_{j,i}(n)$ is the $j$th coordinate of the vector $v_i(n)$. Since each $v_{j,i}(n)$ is
$O(n)$, it can be expressed using Lemma \ref{lem:quotient} as the sum of a linear element of $\IQ$ plus a rational
function of size $O(1)$. Applying Lemma \ref{lem:actuallyLinear}, there is some $\pi\in\Z$
so that in the coset $\pi\Z$, we can write 
$v_{j,i}(n) = l_{j,i}n + O(1)$ where $l_{j,i}$ is a constant integer. By passing to a 
(possibly quite large) cycle, then, we can assume that the
matrix $V_n$ has the form $V_n = ln + O(1)$, where $l$ is a constant integer matrix.

Subtracting the integer column vector $l_1n$ from every column translates both $V_n$ and its
integer hull, so we may assume $l_1$ is the zero vector, and $v_1(n)$ has size $O(1)$. Now multiply
the matrix $l$ on the left by a (constant!) integer unimodular matrix $U$ to put it in Hermite normal 
form. The same unimodular matrix $U$ applied to $V_n$ puts it into an especially nice form; the first column
has size $O(1)$, and $v_{j,i}(n)$ has size $O(1)$ for $j\ge i$; see Figure~\ref{fig:matrix} for a
schematic picture of $V_n$ before and after left multiplication by the unimodular matrix $U$.

\begin{figure}[htpb]
\labellist
\small\hair 2pt
\pinlabel $O(1)$ at -13 83
\pinlabel $O(1)$ at 187 83
\pinlabel $O(n)$ at 313 83
\pinlabel $O(n)$ at 70 83
\pinlabel $O(1)$ at 250 60
\pinlabel $O(n)$ at 290 115
\endlabellist
\centering
\includegraphics[scale=1]{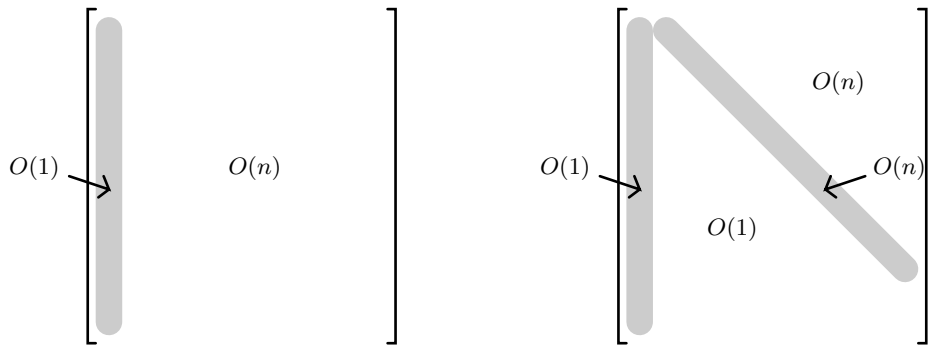}
\caption{The $d\times k$ matrix $V_n$ before and after left multiplication by a constant unimodular matrix 
taking the linear part to Hermite normal form. Note that after the multiplication, rows
$k$ through $d$ are of size $O(1)$, and the ``diagonal'' $v_{i,i+1}$ has size $O(n)$.}\label{fig:matrix}
\end{figure}

One of two things happens now.  Either rows $k$ through $d$ of $V_n$ are identically
zero (or nonexistent, if $k-1=d$), or there is some nonzero entry.  
If the rows are identically zero, then the convex hull of $V_n$ is contained
in the subspace $\R^{k-1}$ (where it is full dimensional, since by assumption $V_n$ is
a simplex). We will return to this case shortly.

Suppose one of the rows $k$ through $d$ contains some nonzero entry; without loss of generality
we may assume it is in row $k$. Since the entries in this row have size $O(1)$, there is a constant $C$
so that $|v_{k,i}(n)| \le C$ for all $i$ and all $n$. Hence the integer points in $V_n$ are
contained in the $2C+1$ affine hyperplanes whose $k$th coordinate is one of $-C,-C+1,\cdots,C$.

The intersection of the convex hull of $V_n$ with each of these affine hyperplanes consists of
polyhedra $P_{-C}(n)$, $P_{-C+1}(n)$, up to $P_C(n)$. Since by hypothesis row $k$ is not
identically zero, $V_n$ is not contained in a single affine hyperplane, and therefore the
dimension of each $P_i$ is strictly less than $k-1$. Moreover, by Lemma~\ref{lem:intersection}
the vertices of each $P_i$ are (for $n\gg 0$) defined by rational functions of size $O(n)$,
since the vertices of $P_i(n)$ are among the intersections of the $1$-dimensional
faces of the convex hull of $V_n$ with a (fixed) affine hyperplane.

Assume that the integer hulls of the polyhedra $P_i(n)$ are $\QIQ$. The integer
hull of $V_n$ is the convex hull of the integer hulls of the $P_i(n)$, so 
Lemma~\ref{lem:convHull} implies that this integer hull is $\QIQ$, proving the theorem in this case.
This gives the induction step (though not the base of the induction)
when $V_n$ is not full dimensional.

The base of the induction is when $V_n$ consists of a single point $p(n)$ in $\R^d$. Each
coordinate of $p(n)$ is a rational function in $n$. By Lemma~\ref{lem:quotient} after dividing
the numerator by the denominator, the remainder is in $\IQ$. An element of $\IQ$ either has positive
degree, in which case it is eventually nonzero, or it has degree zero, in which case it is
nonzero for some fixed set of residues mod the period $\pi$. So the integer hull of $V_n$
is $\QIQ$ in this base case.

\medskip

We now address the case of a full-dimensional simplex with vertices $V_n$,
of dimension $k-1$ in $\R^{k-1}$.  We will show that the integer hull is itself the convex hull 
of the union of integer hulls of finitely many lower-dimensional polyhedra, 
which by induction will imply the theorem.  This procedure is intuitively rather simple; 
we find integer points $p_i$ uniformly close to each vertex $v_i$ and then explain how
to find all integer vertices outside the convex hull of the $p_i$. The reader is invited
to read Example~\ref{exa:2dexample} which implements the algorithm in a simple, but nontrivial, case.

Recall that we have written $v = ln + O(1)$, then subtracted the column $l_1n$ from each column of
$v$, and multiplied the translated $v$ on the left by an integral unimodular matrix $U$ to put
$v$ in the form caricatured in Figure~\ref{fig:matrix}. If the span of $l$ is not full dimensional,
then by reordering the vectors before finding Hermite normal form, we may assume the $k-1$th
row has entries of size $O(1)$, and repeat the argument above. So we may assume the span of $l$
is full dimensional; in other words, we may assume that after subtracting $l_1$ from
each column and taking Hermite normal form, the $l_{i,i+1}$ are all strictly positive.
Hence as $n \to \infty$, the vertex $v_1$ converges to a finite point, and the convex hull of
the $v_i$ converges (in the Hausdorff topology) to a nondegenerate full-dimensional cone.
Such a cone necessarily contains an interior integer point, which we denote $p_1$. Note
that the size of the difference $\|p_1 - v_1(n)\|$ is {\em uniformly bounded}, independent of $n$.
After multiplying on the left by $U^{-1}$ and translating by $-l_1n$, we obtain an integral
vector $p_1(n)$ with linear integral coordinates, and with a uniform bound on $\|p_1(n) - v_1(n)\|$.

Repeating for each vertex, we obtain a constant $C$ and vectors 
$p_i(n)$ for each $i$ with linear integral coordinates so that $\|p_i(n) - v_i(n)\|< C$. 
Note that for $n\gg 0$ the set $P_n:=\lbrace p_i(n)\rbrace$ spans a simplex of full dimension.
This simplex is contained in the integer hull of $V_n$; we now show how to find the integer
points outside this simplex.

The difference of the convex hulls of $V_n$ and of $P_n$ is itself a polyhedron. This
polyhedron decomposes naturally into a union of prisms on the top dimensional faces,
and each prism can be subdivided into finitely many simplices in such a way that each
simplex has vertices in $V_n \cup P_n$, and has at least two vertices of the form
$p_i$, $v_i$ for some $i$. Standard methods of decomposing a prism into simplices with
these properties are used in elementary algebraic topology; see e.g.\/ the
proof of Thm.~2.10 in Hatcher \cite{Hatcher}. Note in particular that the combinatorics
of the subdivision is constant for $n\gg 0$.

In particular, each of the finitely many simplices in the subdivision has vertices
which are rational functions of $n$ of size $O(n)$, and contains a pair of vertices
which are distance $O(1)$ apart. In particular, the span of the linear part of each such simplex
is {\em not} full dimensional, and we can reduce to a previous case where the induction
hypothesis is assumed to hold. 

Hence the integer hull of each simplex in the convex hull of $V_n$ but outside the convex hull of 
$P_n$ is $\QIQ$. The integer hull of $V_n$
is the convex hull of the integer hulls of these finitely many simplices, together with 
the convex hull of $P_n$. By Lemma~\ref{lem:convHull} this is itself $\QIQ$. This completes the
induction step, and proves the theorem.
\end{proof}

\begin{example}\label{exa:2dexample}
We give an example of the theorem in the case of a full-dimensional polyhedron in $\R^2$ to 
illustrate the decomposition into 1-dimensional polyhedra.  

\begin{figure}[htpb]
\includegraphics[scale=1]{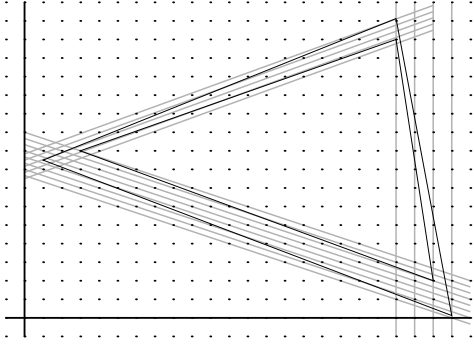}
\caption{The decomposition into polyhedra of dimension $1$ for $n=8$.}
\label{fig:decomp}
\end{figure}

Let 
\[
v_1 = \left[ \begin{array}{c} 2n+ \frac{1}{n} \\ 3n+1 \end{array} \right], \quad v_2 
= \left[ \begin{array}{c} n+\frac{1}{2} \\ 4 \end{array} \right], \quad v_3 
= \left[ \begin{array}{c} \frac{1}{n} \\ 3n+2 \end{array} \right]
\]
We find the $p_i$; first subtract the linear part of $v_1$ and obtain the Hermite normal form:
\[
\left[ \begin{array}{cc} -1 & 0 \\ -3 & 1 \end{array} \right] 
\left[ \begin{array}{ccc} \frac{1}{n} & -n + \frac{1}{2} & -2n + \frac{1}{n} \\ 
                          1          &   -3n + 4         &   2             
       \end{array} \right] = 
\left[ \begin{array}{ccc} -\frac{1}{n}    &  n - \frac{1}{2} & 2n-\frac{1}{n} \\ 
                          1-\frac{3}{n}  &  4 - \frac{3}{2} &  6n + 2 - \frac{6}{n}              
       \end{array} \right]
\]
So $v_1 \to (0, 1)$ (we will write column vectors as row vectors for simplicity), 
and the cone at $v_1$ limits to the cone spanned by $(1,0)$ and $(2,6)$.  
A fixed point in this cone is, for example, $(1,2)$, or $p_1 = (2n-1, 3n-1)$ 
in the original coordinates.  Similarly for $v_2$:
\[
\left[ \begin{array}{cc} 1 & 0 \\ -3 & 1 \end{array} \right] 
\left[ \begin{array}{ccc} n+\frac{1}{n} & \frac{1}{2} & -n + \frac{1}{n} \\ 
                          3n+1          &    4        &   3n-2             
       \end{array} \right] = 
\left[ \begin{array}{ccc} n+\frac{1}{n} & \frac{1}{2}    & -n + \frac{1}{n} \\ 
                        1 -\frac{3}{n} &  4-\frac{3}{2} &   6n-2 - \frac{3}{n}           
       \end{array} \right]
\]
So $v_2 \to (1/2, 5/2)$ and the cone limits to the cone spanned by $(1,0)$ and $(-1,6)$.  
A fixed point in this cone is $(1,3)$.  In the original coordinates, then, $p_2 = (n+1,6)$.  
A similar procedure gives $p_3 = (2,3n+1)$.

The remaining integral points are contained in three quadrilaterals, which can be decomposed into
six triangles with degenerate linear part. The integer lattice points in these simplices
are contained in finitely many additional affine linear subspaces, as indicated in Figure~\ref{fig:decomp}.
\end{example}

\begin{corollary}
\label{cor:coneHull}
Let $V$ be an cone with integral linear extremal vectors.  Then the integer hull (open or closed) of 
$V - 0$ is $\QIQ$.
\end{corollary}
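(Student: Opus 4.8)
The plan is to reduce the statement to the Integer Hull Theorem (Theorem~\ref{thm:inthull}) applied to a bounded polytope family. Write $V = V_n = \mathrm{cone}(r_1(n),\dots,r_m(n))$ with the $r_i(n)$ integer and linear in $n$ (after passing to a cycle, using Lemma~\ref{lem:actuallyLinear}), and assume $V_n$ is pointed. First I would triangulate $V_n$ into finitely many simplicial subcones $\sigma_j(n)$, each spanned by a subset of $\{r_i(n)\}$ and introducing no new rays; by the argument of Lemma~\ref{lem:convHull} the combinatorial type of this triangulation is eventually constant. Since $\Z^d\cap(V_n-0)=\bigcup_j\Z^d\cap(\sigma_j(n)-0)$, and since $\mathrm{conv}\bigl(\bigcup_j(S_j+R_j)\bigr)=\mathrm{conv}(\bigcup_j S_j)+\mathrm{cone}(\bigcup_j R_j)$ for polytopes $S_j$ and cones $R_j$, a further application of Lemma~\ref{lem:convHull} reduces the problem to a single simplicial cone $\sigma_n=\mathrm{cone}(r_1(n),\dots,r_k(n))$ with linearly independent generators.

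The crux is excising the apex. Let $M_n$ be the $d\times k$ matrix with columns $r_i(n)$, let $G_n=M_n^{T}M_n$ be the Gram matrix, and set $\ell_n:=M_n\,\mathrm{adj}(G_n)\,\mathbf 1$ with $\mathbf 1=(1,\dots,1)^{T}$. Then $\ell_n$ has entries which are integer polynomials in $n$, and since $\mathrm{adj}(G_n)G_n=\det(G_n)I$ one computes $\ell_n\cdot r_i(n)=\det G_n>0$ for every $i$; hence $\ell_n\cdot x>0$ for all $x\in\sigma_n-0$, and because $\ell_n$ is integral, $\ell_n\cdot x\ge 1$ for every lattice point $x\in\sigma_n-0$. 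Therefore $\Z^d\cap(\sigma_n-0)=\Z^d\cap P_n$, where $P_n:=\{x\in\sigma_n:\ell_n\cdot x\ge 1\}$ is a polyhedron not containing the origin, with recession cone $\sigma_n$ and with vertices $r_i(n)/\det G_n$, which are rational functions of $n$ of size $O(n)$. (For the ``open'' integer hull one replaces the single inequality $\ell_n\cdot x\ge 1$ by the system $\mathrm{adj}(G_n)M_n^{T}x\ge\mathbf 1$, which cuts out exactly the lattice points in the relative interior of $\sigma_n$; the argument is otherwise identical.)

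Next I would intersect $P_n$ with a box $Q_n:=[-Cn,Cn]^{d}$, with $C$ chosen so that $Q_n$ contains the zonotope $Z_n:=\{\sum_i\lambda_i r_i(n):0\le\lambda_i\le 1\}$ for all $n\gg 0$. Then $P_n\cap Q_n$ is a bounded polytope whose vertices are rational functions of $n$ of size $O(n)$, so by Theorem~\ref{thm:inthull} its integer hull is $\QIQ$. By Gordan's lemma the monoid $\Z^d\cap\sigma_n$ is generated by the finitely many lattice points of $Z_n$; these generators lie in $P_n\cap Q_n$, so every lattice point of $\sigma_n-0$ is the sum of such a generator and an element of $\sigma_n$, which yields
\[
\mathrm{cl}\,\mathrm{conv}\bigl(\Z^d\cap(\sigma_n-0)\bigr)\;=\;\bigl(\text{integer hull of }P_n\cap Q_n\bigr)+\sigma_n .
\]
Finally I would apply Lemma~\ref{lem:convHull} with $S(n)$ the vertices of the integer hull of $P_n\cap Q_n$ (eventually integer‑linear after passing to a cycle) and $R(n)=\sigma_n$, concluding that the vertices of the integer hull of $\sigma_n-0$ are an eventually fixed subset of $S(n)$, hence integer polynomials in $n$ after passing to a cycle. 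Reassembling the simplicial pieces via Lemma~\ref{lem:convHull} once more proves the corollary.

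I expect the main obstacle to be precisely the apex. Naively one would like to say the integer hull of $V_n-0$ equals the integer hull of some $\QIQ$ polytope plus the recession cone $V_n$, but any polytope inside $V_n$ containing the origin has the origin as a vertex, and deleting the origin from its lattice points can create genuinely new extreme points ``near the apex'' — for instance for $V_n=\mathrm{cone}((1,2n{+}1),(2n{+}1,1))$ the point $(1,1)$ is a vertex of the integer hull of $V_n-0$ but lies on no proper face. The functional $\ell_n$ coming from the adjugate of the Gram matrix is exactly what is needed to remove a uniform, polynomial‑in‑$n$ neighbourhood of the apex while keeping every nonzero lattice point, reducing everything to the bounded case already settled by Theorem~\ref{thm:inthull}. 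A secondary point requiring some care is the non‑full‑dimensional and non‑simplicial cases; both are absorbed by the triangulation step, since the Gram‑matrix construction only uses that the generators of each simplicial piece are linearly independent.
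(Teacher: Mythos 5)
Your argument is correct, and it takes a somewhat different --- and more careful --- route than the paper's. The paper's own proof is terse: for the closed cone it truncates $V$ by the affine hyperplane through the tips of the extremal vectors $r_i(n)$, applies Theorem~\ref{thm:inthull} to the resulting polytope, and adjoins the extremal rays via Lemma~\ref{lem:convHull}; for the open cone it shifts each facet inequality $H_j\cdot x\ge 0$ of $V$ inward by $1$ and then bounds linearly. The weak spot in the paper's closed-case treatment is precisely what you call the ``main obstacle'': the truncated polytope contains the origin as a lattice point, and deleting $0$ from the vertex list of its integer hull does \emph{not} yield the integer hull of $V-0$. Your example $V_n=\mathrm{cone}\bigl((1,2n{+}1),(2n{+}1,1)\bigr)$ shows this cleanly, since $(1,1)$ is a vertex of the integer hull of $V_n-0$ yet equals $\tfrac{1}{3}(0,0)+\tfrac{1}{3}(1,2)+\tfrac{1}{3}(2,1)$ inside the truncated triangle. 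Your construction handles this correctly: the integral functional $\ell_n=M_n\,\mathrm{adj}(G_n)\,\mathbf 1$ cuts with a single hyperplane that keeps every nonzero lattice point of the cone while excising the apex, and the Gordan's-lemma step shows that the vertices of the integer hull of $\sigma_n-0$ all arise from the bounded slab $P_n\cap Q_n$. The cost of your route is the preliminary triangulation into simplicial subcones (needed so that $G_n$ is nonsingular), which the paper's facet-shifting argument avoids; the benefit is that one construction treats the open and closed cases uniformly and makes the excision of the apex explicit, repairing a genuine gap in the closed case.
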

\begin{proof}
The closed integer hull of such a cone is bounded by the hyperplane containing the extremal vectors, 
since the vectors are exactly linear (no error term), so the integer hull of the cone is the 
integer hull of this polyhedron, which is $\QIQ$ by Theorem \ref{thm:inthull}, plus the linear 
rays which are its extremal vectors. So in this case the proof follows from Lemma \ref{lem:convHull}.  
For the open cone, first observe that if $p-v \in V$ for some extremal vector $v$, then $p$ cannot 
be in the integer hull, so the integer hull is linearly bounded.  Then, simply change the bounds on 
the bounding hyperplanes by $1$ to slide them in slightly.  This gives a closed, linearly bounded 
polyhedron, which has a $\QIQ$ integer hull.
\end{proof}

\subsection{Relaxing the $O(n)$ assumption}

Given that the results in Section~\ref{quasipolynomial_section} do not require any $O(n)$ assumption, 
it is natural to ask why Theorem~\ref{thm:inthull} does.  We briefly address this question.  
In fact, we do not know whether the 
statement of the theorem is correct without the $O(n)$ assumption, but our proof does 
use it in a fundamental way.  Either the polyhedron is not full-dimensional or degenerate 
in some way, in which case we may slice it a uniformly bounded number of times to induct, or the 
polyhedron is full-dimensional.  In the full-dimensional case, we find an integer point uniformly 
close to each vertex and slice the ``shell'' of the polyhedron as shown in Figure~\ref{fig:decomp}.  
The $O(n)$ assumption is necessary to obtain the integer point uniformly close to each vertex.  
For example, consider the polyhedron in two dimensions with vertices
\[
v_1 = \left[ \begin{array}{c} \frac{1}{2} \\ \frac{1}{2} \end{array} \right]
\quad
v_2 = \left[ \begin{array}{c} n^2 \\ \frac{1}{2} \end{array} \right]
\quad
v_3 = \left[ \begin{array}{c} n^2 \\ n \end{array} \right]
\]
This polyhedron is \emph{not} degenerate, for its height grows linearly, but because 
its length grows like $n^2$, the polyhedron gets arbitrarily skinny close to the vertex $v_1$, 
and there is no integer point uniformly close to it.

This example is not definitive; 
one could hope to adapt Lenstra's key idea in \cite{Lenstra} to show that either
the highest order terms give a simplex of full dimension, or after acting
by a unimodular matrix, the simplex is decomposes into finitely many simplices of
smaller complexity (as measured either by dimension or degree of growth).

In a moment we shall explain why we believe that this hope can be realized in 
dimension $2$.
In dimensions $3$ and higher, intuition is hard to come by; we don't know how to 
prove the theorem, but we have no counterexample.

\begin{question}
Does Theorem~\ref{thm:inthull} hold without the $O(n)$ assumption?
\end{question}

In dimension $2$ there is a strong relationship between integer hulls and 
continued fractions, and this lead us to conjecture: 

\begin{conjecture}\label{conj:dim2}
In dimension $d=2$, Theorem~\ref{thm:inthull} holds without the $O(n)$ assumption.
\end{conjecture}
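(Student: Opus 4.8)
The plan is to follow the architecture of the proof of Theorem~\ref{thm:inthull}, retaining every step that does not invoke the $O(n)$ hypothesis and replacing the one step that does. As in that proof, Lemma~\ref{lem:convHull} together with subdivision reduces us to a triangle $V_n$ with vertices $v_1(n),v_2(n),v_3(n)$ whose coordinates are rational functions of $n$; the non–full-dimensional cases (segments and points) go through verbatim, the only change being that without the $O(n)$ assumption the conclusion weakens from ``integral linear after passing to a cycle'' to ``integral polynomial, of possibly higher degree, after passing to a cycle,'' since Lemma~\ref{lem:quotient} now produces quasipolynomial rather than linear quotients. The sole obstruction, exactly as in the discussion preceding Conjecture~\ref{conj:dim2}, is that a vertex $v_i(n)$ of a full-dimensional triangle need no longer have an integer point at uniformly bounded distance: the triangle may be an arbitrarily thin sliver near $v_i(n)$.

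To deal with a thin sliver at a vertex, say $v_1(n)$, I would pass to the wedge $W_n$ that is the cone at $v_1(n)$ spanned by the edge directions $e_2(n)=v_2(n)-v_1(n)$ and $e_3(n)=v_3(n)-v_1(n)$; the integer points of $V_n$ in a bounded neighbourhood of $v_1(n)$ agree with those of $W_n$ there, and the combinatorics of the lattice points of a two-dimensional wedge near its apex is governed by the continued fraction expansion of the ``relative slope'' of its two edge directions --- the classical Klein-sail picture, and precisely the ``strong relationship between integer hulls and continued fractions'' alluded to above. Concretely, after an ($n$-dependent) unimodular change of coordinates $U(n)$ with entries in $\IQ$ and inverse in $\IQ$ --- available by Corollary~\ref{cor:HNF}, and preserving the property of having a $\QIQ$ integer hull --- one may assume $e_2(n)=(1,0)$ and $e_3(n)=(p(n),q(n))$ for rational functions $p,q$. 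One then runs the polynomial Euclidean algorithm on the pair $p,q$: a step $q=a(n)p+(\text{lower degree})$ corresponds to applying the shear $\left(\begin{smallmatrix}1&0\\-a(n)&1\end{smallmatrix}\right)$, which ``fattens'' the wedge, and the algorithm terminates after at most $\deg q$ steps, a bound independent of $n$. The sail vertices introduced at each stage are the convergents and intermediate fractions, whose coordinates --- being produced by a bounded number of applications of the integer division of Lemma~\ref{lem:quotient} and the gcd of Lemma~\ref{lem:EA} --- lie eventually in $\IQ$.

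The main obstacle, and the reason this remains a conjecture, is that the sail of the \emph{full} wedge $W_n$ genuinely has length growing with $n$ (each polynomial partial quotient $a(n)$ contributes on the order of $a(n)$ sail vertices), so one cannot simply assert that the sail has boundedly many vertices. The resolution I would pursue is to slice: cut $V_n$ by a bounded-height affine hyperplane transverse to the sliver at $v_1(n)$, so that the piece of $V_n$ on the far side of the cut is no longer a sliver at that vertex --- its linear part there is non-degenerate, putting one back in a previously handled situation --- while the thin piece between $v_1(n)$ and the cut is two-dimensional but of strictly smaller ``complexity,'' measured by the total degree of the rational functions describing its edges after the shears above, so that the statement for it follows by an outer induction on this complexity, the base case (complexity $1$, i.e.\ size $O(n)$) being Theorem~\ref{thm:inthull} itself. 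Arranging that the complexity measure genuinely decreases under this shear-and-slice operation, and that finitely many slices suffice uniformly in $n$, is the delicate point; two-dimensionality is used precisely to guarantee that the Euclidean reduction on the edge slopes terminates in boundedly many steps, which is why the argument does not obviously extend to dimension $\ge 3$.
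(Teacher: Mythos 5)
This statement is a \emph{conjecture} in the paper, and the authors explicitly state ``we do not know how to prove it,'' so there is no proof of record to compare against. Your writeup is appropriately honest in the same way: you flag the shear-and-slice termination argument as ``the delicate point'' and ``the reason this remains a conjecture,'' so what you have produced is a proof strategy plus an identified gap, not a proof. In that sense you and the paper are in the same place.

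Your strategy largely parallels the paper's informal discussion. Both reduce to the cone (wedge) at each vertex of the polygon; both invoke the classical continued-fraction/Klein-sail description of the integer hull of a two-dimensional cone; both note that Lemma~\ref{lem:EA} handles the slopes. The obstacle the paper isolates, however, is slightly different from yours. After a unimodular change of coordinates and an integer translation, the paper normalizes the wedge to have integral rays $(1,0)$ and $(a,b)$ but apex at an \emph{off-lattice} point $(p/q,0)$ with $0\le p/q<1$; the difficulty is precisely that this fractional shift of the apex breaks the clean correspondence between Euclidean steps and sail vertices, so Lemma~\ref{lem:EA} cannot be applied directly. Your stated obstacle --- that each partial quotient $a(n)$ contributes order $a(n)$ sail vertices, giving unboundedly many vertices --- is not quite right for the cone-at-a-lattice-point picture: the vertices of a two-dimensional sail are the (alternating) \emph{convergents}, one per Euclidean step, so a Euclidean algorithm terminating in $O(\deg)$ steps yields boundedly many sail vertices with $\IQ$ coordinates; that is exactly what the paper uses to justify the origin-based case. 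The unbounded-vertex phenomenon you are worried about can indeed occur, but it is a consequence of the off-lattice apex, not of large partial quotients per se. Your proposed remedy (slice by a bounded-height transversal and induct on a degree-type complexity) is a reasonable idea and not something the paper attempts, but as you say yourself, you have not shown the complexity measure decreases or that finitely many slices suffice uniformly in $n$; absent that, the argument does not close, which is consistent with the statement remaining a conjecture in the paper.
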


We give some explanation.  Given a line through the origin with slope $\alpha$, the 
convergents of the continued fraction approximation of $\alpha$ give lattice points on
alternating sides of the line which are the successively closest such points as measured
by the relative error of the approximation of $\alpha$ with respect to the horizontal
distance to the origin.  In particular, the integer 
hull of the cone spanned by $(0,1)$ and $(1, \alpha)$, without the origin, comprises the 
points $(p_i,q_i)$, where $p_i/q_i$ gives every other convergent of $\alpha$.  

It follows from Lemma~\ref{lem:EA} that the integer hull of a cone based at the origin 
with rays which are rational functions of $n$ is $\IQ$ (we do not include the origin 
in the integer hull --- otherwise, this would be trivial).

The analogue of Theorem~\ref{thm:inthull} in $2$ dimensions and without the $O(n)$ 
assumption does \emph{not} follow immediately.  It is true that we can recover the 
integer hull of a polyhedron from the integer hulls of the cones on each vertex, but 
we cannot reduce the integer hull of an arbitrary cone to the integer hull of a cone 
based at the origin: given an arbitrary cone, it is not difficult to see that 
we can act by a unimodular matrix and an integer shift to obtain an equivalent cone 
with rays $(1,0)$ and $(a,b)$, where $a,b\in \Z$, and where this equivalent cone is 
based at a point $(p/q,0)$, where $p,q \in \Z$ and $0 \le p/q < 1$.  Thus, we 
can \emph{almost} find an equivalent cone based at the origin, but not quite: it 
will be shifted over slightly.  Though small, this shift means that we cannot 
simply use Lemma~\ref{lem:EA} to conclude that the integer hull is $\IQ$.  

The integer hulls of cones as described above are very likely to have well-behaved 
integer hulls, and thus we make Conjecture~\ref{conj:dim2}, but we do not 
know how to prove it.

\section{Stable Commutator Length}

In this section, we discuss consequences of Theorem~\ref{thm:inthull} for geometric
group theory, in particular for the theory of
stable commutator length. A basic reference is \cite{CalegariSCL}, and our applications build on
the technology and viewpoint developed in \cite{CalegariSSS}.

\subsection{Background}\label{scl_background_section}

If $G$ is a group, and $g\in [G,G]$, the {\em commutator length} of $g$, denoted $\cl(g)$, is the
least number of commutators in $G$ whose product is $g$. The {\em stable commutator length} is
defined to be the limit $\scl(g):=\lim_{n\to \infty} \cl(g^n)/n$.

Stable commutator length extends to a function on homologically trivial formal finite sums of elements, 
by defining
$$\cl(g_1 + g_2 + \cdots + g_m) = \min_{\lbrace t_i \rbrace} \cl(g_1^{t_1}g_2^{t_2}\cdots g_m^{t_m})$$
(where superscript denotes conjugation) and
$$\scl(g_1 + g_2 + \cdots + g_m) = \lim_{n \to \infty} \cl(g_1^n + \cdots + g_m^n)/n$$
By linearity on rays, and continuity, $\scl$ extends to a pseudo-norm on the real vector space $B_1(G)$
of $1$-boundaries (in the sense of group homology), and vanishes on the subspace 
$H:=\langle g^n - ng, g - hgh^{-1}\rangle$. Hence $\scl$ descends to a pseudo-norm on $B_1^H(G):= B_1(G)/H$.
For certain groups --- e.g.\/ when $G$ is {\em word hyperbolic} --- $\scl$ is a genuine norm on $B_1^H$.

\medskip

Stable commutator length can be reformulated in topological terms. If $K$ is a $K(G,1)$, conjugacy
classes in $G$ correspond to free homotopy classes of loops in $K$. Let $g_1,g_2,\cdots,g_m \in G$
with $\sum g_i \in B_1$, and let $\Gamma:\coprod_i S^1_i \to K$ be the corresponding free homotopy classes. 
A map of a compact, oriented surface $f:S \to K$ is {\em admissible} (for $\Gamma$)
if no component of $S$ has positive Euler characteristic, and if the restriction
$\partial f:\partial S \to K$ factors as a product $\Gamma \circ i$ for some 
$i:\partial S \to \coprod_i S^1_i$ which is an
{\em oriented covering map} of degree $n(S)$. Then there is a formula
$$\scl(\sum g_i) = \inf_S -\chi(S)/2n(S)$$
where the infimum is taken over all surfaces admissible for $\Gamma$.

\subsection{Scl, sails and surgery}\label{sss_section}

In \cite{CalegariSSS} the problem of computing $\scl$ in a free product of free abelian groups is
reduced to a kind of integer programming problem. We summarize the conclusions of that paper; for
more details, see \cite{CalegariSSS}, especially \S~3--4. For simplicity we restrict attention to a product
of two factors $G=A*B$. 
Then we can take $K$ to be a wedge of tori $K(A,1)\vee K(B,1)$ and
consider collections of loops $\Gamma:\coprod_i S^1_i \to K$ which are obtained by concatenating
essential arcs alternately contained in the $K(A,1)$ and the $K(B,1)$ factor.
Note that we are implicitly restricting attention to a generic special case in which no $S^1_i$
maps entirely into $K(A,1)$ or $K(B,1)$; in the language of \cite{CalegariSSS} we are forbidding
{\em Abelian loops}. Removing this restriction would not add any serious technical difficulty, but
it would add considerably to the notation and interfere with the exposition.

The set of arcs mapping to $K(A,1)$ is denoted $T(A)$, and similarly for $B$. The homotopy class
of each arc can be identified with an element of $A = \pi_1(K(A,1))$, so we get a map
$h:T(A) \to A$. Let $C_1(A)$ be the real vector space spanned by $T(A)$.
Let $T_2(A)$ be the product $T(A) \times T(A)$, and let $C_2(A)$ be the
real vector space spanned by $T_2(A)$. There are linear maps $h:C_2(A) \to A\otimes \R$ defined
on basis elements by $h(\tau,\tau') = \frac 1 2(h(\tau) + h(\tau'))$, and $\partial:C_2(A) \to C_1(A)$ 
defined on basis elements by $\partial(\tau,\tau') = \tau-\tau'$. Let $V_A$ be the 
rational polyhedral cone in $C_2(A)$ of {\em non-negative} vectors in the kernel of $h$ and of $\partial$.

The {\em sail} of $V_A$ is defined to be the boundary of the convex hull of $D_A + V_A$, where
$D_A$ is the union of the integer lattice points contained in certain open faces of $V_A$
(precisely which faces are included depends on some additional combinatorial data associated to
$\Gamma$). For $v \in V_A$, let $|v|$ denote the $L_1$ norm of the vector $v$ (in the given basis).
Define the {\em Klein function} $\kappa$ on $V_A$ to be the non-negative function, linear on
rays, which is equal to $1$ on the sail. Note that since $V_A$ is a rational polyhedral cone,
the sail is a finite sided (noncompact) integral polyhedron, and $\kappa$ is the minimum of finitely
many rational linear functions.

\medskip

A surface $S$ mapping to $K$ with boundary factoring through $\Gamma$ somehow
decomposes into $S_A$ and $S_B$ mapping to $K(A,1)$ and
$K(B,1)$ respectively, and determines vectors $v(S_A)$ and $v(S_B)$ in $V_A$ and $V_B$ respectively.
The surfaces $S_A$ and $S_B$ have corners; they can be given an ``orbifold Euler characteristic''
$\chi_o$ in a natural way such that $\chi(S) = \chi_o(S_A) + \chi_o(S_B)$.

The following is a restatement of Lemma~3.9 from \cite{CalegariSSS}:
\begin{lemma}
Define $\chi_o:V_A \to \R$ by $\chi_o(v) = \kappa(v) - |v|/2$. There is an inequality
$\chi_o(v(S_A)) \ge \chi_o(S_A)$, and conversely, for any rational vector $v\in V_A$ and any
$\epsilon > 0$ there is an integer $n$ and a surface $S_A$ with $v(S_A)=nv$ and
$|\chi_o(S_A)/n - \chi_o(v)| \le \epsilon$.
\end{lemma}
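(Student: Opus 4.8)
The plan is to reprove Lemma~3.9 of \cite{CalegariSSS} in the notation of this section, so I will only indicate the steps. The structural input is that any surface $S_A$ mapping to $K(A,1)$ can be \emph{normalized} into a union of polygonal pieces over which both $v(S_A)\in V_A$ and the orbifold Euler characteristic $\chi_o(S_A)$ are additive. First I would compress $S_A$ along essential embedded arcs and simple closed curves and discard inessential components; since $K(A,1)$ is an aspherical surface (a torus), this terminates with each component a disk whose boundary alternates between arcs carrying $T(A)$-classes and corners. Such a disk piece $p$ with $k_p$ corners carries an integral vector $v(p)$ supported on the lattice points of a single face of $V_A$, satisfies $|v(p)| = k_p$, and has $\chi_o(p) = 1 - k_p/2$; the combinatorial conditions coming from $\Gamma$ are exactly what is needed to ensure that $v(p)\in D_A$. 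Summing over the pieces recovers $v(S_A)$ up to the covering degree $n(S)$, and summing $\chi_o$ recovers $\chi_o(S_A) = N - |v(S_A)|/2$, where $N$ is the number of pieces.

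For the inequality $\chi_o(v(S_A))\ge\chi_o(S_A)$, recall that the statement ``$v/\kappa(v)$ lies on the sail $\partial\,\mathrm{conv}(D_A+V_A)$'' is precisely the statement that $\kappa(v)$ is the optimal value of the linear program maximizing the total $D_A$-weight $\sum_i\mu_i$ over all expressions $v = \sum_i\mu_i d_i + u$ with $\mu_i\ge 0$, $d_i\in D_A$, $u\in V_A$. The normalized decomposition of $S_A$ is such an expression with $\sum_i\mu_i = N$ (and $u=0$, since every corner lies on a piece), so $\kappa(v(S_A))\ge N$; subtracting $|v(S_A)|/2$ from both sides gives the claim. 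For the converse I would run this in reverse: given rational $v\in V_A$ and $\epsilon>0$, choose a rational feasible point $v = \sum_i\mu_i d_i + u$ whose value $\sum_i\mu_i$ is within $\epsilon$ of $\kappa(v)$, and pick an integer $n$ clearing all denominators. Take $n\mu_i$ copies of the disk of $d_i$, together with low-complexity pieces realizing the residual integral vector $nu\in V_A$, and glue them along their $T(A)$-arcs. The hypothesis $\partial v = 0$ lets the copies of each arc class be matched in oppositely-oriented pairs, so the boundary closes up onto $n$ copies of $\Gamma$; the hypothesis $h(v)=0$ lets the map of the $1$-skeleton into the torus $K(A,1)$ extend over the $2$-cells. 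The surface $S_A$ so produced has $v(S_A)=nv$ and $\chi_o(S_A) = n\sum_i\mu_i - |nv|/2$, so $\chi_o(S_A)/n$ lies within $\epsilon$ of $\kappa(v) - |v|/2 = \chi_o(v)$.

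The main obstacle is the assembly in the converse: verifying that the purely homological data $\partial v = 0$ and $h(v)=0$ leave no residual obstruction to building an honest admissible surface, and that the error introduced by the cone term $u$ and by the $\epsilon$-suboptimal decomposition is $O(1)$ and so disappears after dividing by $n$. This is exactly what is carried out in Lemma~3.9 of \cite{CalegariSSS}; the only feature special to the present setting is that $A$ is free abelian, so that every null-homologous loop in $K(A,1)$ bounds and the filling can be done efficiently.
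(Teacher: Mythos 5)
The paper does not prove this lemma: it is stated explicitly as ``a restatement of Lemma~3.9 from \cite{CalegariSSS}'' and the proof is delegated there, so there is no proof in the present paper against which to compare your sketch. As a reconstruction of the cited argument, your sketch gets the essential mechanism right: the LP-duality reading of $\kappa$ (namely, $\kappa(v)=\max\{\sum\mu_i:v=\sum\mu_i d_i+u,\ \mu_i\ge 0,\ d_i\in D_A,\ u\in V_A\}$), the accounting $\chi_o(p)=1-|v(p)|/2$ for disk pieces, and the fact that the genuinely nontrivial step is the converse assembly, which you correctly defer to \cite{CalegariSSS}.

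Two cautions on the details you did spell out. First, $K(A,1)$ is a torus of dimension $\mathrm{rank}(A)$, not in general a surface; all that is used is asphericity. Second, the claim that compressing $S_A$ always terminates with each component a disk is not obviously true --- an essential simple closed curve on a component may map to an essential loop in the torus, and then no compression is available. Fortunately this step is unnecessary for the inequality: a non-disk piece $p$ has $\chi(p)\le 0$, so one can simply assign $\mu=0$ to such pieces and absorb $v(p)$ into the cone term $u$ of your feasible point. One then gets $\kappa(v(S_A))\ge \#\{\text{disk pieces}\}\ge\sum_p\chi(p)$, and since $\chi_o(S_A)=\sum_p\chi(p)-|v(S_A)|/2$, the inequality $\chi_o(S_A)\le\chi_o(v(S_A))$ follows without any normalization to disks. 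The claim that disk pieces have $v(p)\in D_A$ is, as you say, exactly what the combinatorial definition of $D_A$ in \cite{CalegariSSS} is engineered to ensure, and that verification (together with the gluing-up in the converse) is the part that genuinely lives in the cited paper rather than here.
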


Two surfaces $S_A$ and $S_B$ glue together to form an admissible surface $S$ if and only if the
vectors $v(S_A)$ and $v(S_B)$ lie in the intersection of $V_A \times V_B$ with a certain linear
subspace (defined by equating certain coordinates in $C_2(A)$ with coordinates in $C_2(B)$ according
to the combinatorics of $\Gamma$). This intersection is a polyhedral cone $Y$, and we can define
a function $\chi$ on $Y$ by $\chi(v,v') = \chi_o(v) + \chi_o(v')$. There is a linear map
$d:Y \to H_1(\coprod_i S^1_i)$ which takes a surface $S$ to the image of the fundamental class
$[\partial S]$ in the homology of $\coprod_i S^1_i$. The following lemma is a restatement of (part of)
Theorem~3.14 from \cite{CalegariSSS}:

\begin{lemma}
Given $C \in H_1(\coprod_i S^1_i)$ representing a
class in $B_1^H(A*B)$ there is a formula
$$\scl(C) = \min_{y \in d^{-1}(C)\cap Y} -\chi(y)/2$$
\end{lemma}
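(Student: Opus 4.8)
The plan is to recognize this lemma as the sail/Klein-function repackaging of Theorem~3.14 of \cite{CalegariSSS}, and to prove it as a two-sided estimate anchored by the preceding lemma, followed by a linear-programming argument promoting the infimum defining $\scl$ to a minimum. Recall, under the standing no-Abelian-loops genericity, that any admissible surface $S$ for $\Gamma$ of degree $n(S)$ decomposes into pieces $S_A\to K(A,1)$, $S_B\to K(B,1)$ with $\chi(S)=\chi_o(S_A)+\chi_o(S_B)$ and associated vectors $v(S_A)\in V_A$, $v(S_B)\in V_B$; the gluing compatibility of the two pieces is exactly the condition that $v(S):=(v(S_A),v(S_B))$ lie in $Y$, and the boundary of $S$ represents $n(S)\,C$, i.e.\ $d(v(S))=n(S)\,C$.

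For the inequality $\scl(C)\ge \min_{y\in d^{-1}(C)\cap Y}-\chi(y)/2$: given such an $S$, set $y:=v(S)/n(S)$, which lies in $d^{-1}(C)\cap Y$ since $Y$ is a cone; then $\chi_o(v(S_A))\ge\chi_o(S_A)$ and its analogue for $B$ give $-\chi(v(S))\le -\chi(S)$, and since $\chi$ is homogeneous of degree one on $Y$ (a sum of the degree-one functions $\kappa$ and $|\cdot|/2$, the latter linear on $V_A$, $V_B$) we obtain $-\chi(y)/2=-\chi(v(S))/2n(S)\le -\chi(S)/2n(S)$; taking the infimum over $S$ gives the claim. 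For the reverse inequality, fix a \emph{rational} $y=(v,v')\in d^{-1}(C)\cap Y$ and $\epsilon>0$. The converse half of the preceding lemma yields surfaces $S_A$, $S_B$ realizing positive multiples of $v$, $v'$ with orbifold Euler characteristic per unit within $\epsilon$ of $\chi_o(v)$, $\chi_o(v')$; passing to a common multiple of the two degrees we may assume $v(S_A)=nv$, $v(S_B)=nv'$, whose coordinates match because $(v,v')\in Y$, so $S_A$ and $S_B$ glue to an admissible surface $S$ for $\Gamma$ of degree $n$ with $d(v(S))=nC$ and $\chi(S)=\chi_o(S_A)+\chi_o(S_B)$. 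Then $-\chi(S)/2n\le -\chi(y)/2+\epsilon$, so $\scl(C)\le -\chi(y)/2$; since $-\chi$ is continuous (piecewise rational linear) on $Y$ and the rational points are dense in $d^{-1}(C)\cap Y$, this gives $\scl(C)\le\inf_{y}-\chi(y)/2$.

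It remains to see the infimum is attained. Since $Y$ is a rational polyhedral cone, $d$ a rational linear map, and $C$ integral, $d^{-1}(C)\cap Y$ is a rational polyhedron, nonempty because $\scl(C)<\infty$ for $C$ coming from $B_1^H(A*B)$. On it, $-\chi(v,v')/2=-\kappa(v)/2-\kappa(v')/2+(|v|+|v'|)/4$ is a convex piecewise rational linear function, since each $\kappa$ is a minimum of finitely many rational linear functions and the $|\cdot|$ terms are linear on $V_A$, $V_B$; it is bounded below because its infimum equals $\scl(C)\ge 0$. Minimizing such a function over a rational polyhedron is a linear program whose objective is bounded below on a nonempty feasible region, so by the fundamental theorem of linear programming the optimum is attained, at a rational point; this also records that $\scl(C)$ is rational here. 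The only genuine obstacle is the combinatorial gluing step in the reverse inequality: one must verify that the pieces furnished by the preceding lemma, once their covering degrees are matched, assemble into a surface admissible \emph{for $\Gamma$} and not for some other curve system, which is precisely the content being imported from \cite{CalegariSSS}; the degenerate possibility $d^{-1}(C)\cap Y=\emptyset$ does not arise under the stated hypothesis on $C$.
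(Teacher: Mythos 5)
The paper does not actually prove this lemma: it is introduced with the sentence ``The following lemma is a restatement of (part of) Theorem~3.14 from \cite{CalegariSSS},'' and no argument is given. Your proposal is therefore necessarily a different route --- it reconstructs the proof of the cited theorem rather than citing it.

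As a reconstruction it is essentially sound and well-organized. The lower bound ($\scl(C)\ge\min$) correctly uses the fact that any admissible $S$ yields $y=v(S)/n(S)\in d^{-1}(C)\cap Y$ with $-\chi(y)/2\le -\chi(S)/2n(S)$ via the inequality $\chi_o(v(S_A))\ge\chi_o(S_A)$ and the degree-one homogeneity of $\kappa$ and $|\cdot|$ on the cone. The upper bound correctly invokes the constructive half of the preceding lemma for the $A$- and $B$-pieces, the passage to a common covering degree (harmless since $\chi_o(S)/n$ is scale-invariant under taking disjoint copies), and then density of rational points plus piecewise-rational-linear continuity of $\chi$ on $Y$. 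The attainment argument is also right: $-\chi/2$ is a maximum of finitely many rational linear functions (since $\kappa$ is a minimum of such) on the rational polyhedron $d^{-1}(C)\cap Y$, and a convex polyhedral function bounded below on a nonempty polyhedron achieves its infimum; this reformulates as a bounded feasible LP. You are also right to flag, explicitly, that the one non-formal step is the combinatorial gluing --- that compatible vectors $(v,v')\in Y$ together with surfaces $S_A$, $S_B$ realizing positive multiples of them genuinely assemble into an admissible surface for $\Gamma$; this is exactly the content of \cite{CalegariSSS} Theorem~3.14 that the present paper takes as a black box, and your proposal does not (and should not be expected to) reprove it. In short: where the paper cites, you sketch a proof, and your sketch matches what the citation supplies.
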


\subsection{Surgery families}
We recall the definition of surgery and surgery families from \cite{CalegariSSS}.  
Let $\{A_i\}$ and $\{B_i\}$ be families of free abelian groups, and $\rho_i : A_i \to B_i$ 
a family of homomorphisms.  Then $\rho_i$ induces a homomorphism $\rho: *_i A_i \to *_iB_i$, 
and we say $\rho$ is induced by {\em surgery}.  If $C \in B_1^H(*_iA_i)$, then $\rho(C)$ 
is obtained by surgery on $\rho$.  The nomenclature comes from {\em Dehn surgery} in
$3$-manifold topology (see \cite{CalegariSSS}). 
Now let $\sigma_i$ and $\tau_i$ be two families of homomorphisms as above.  
Define $\rho_i(p) = \sigma_i + p\tau_i$; $\rho_i$ is called a {\em line} of surgeries.

Let $w \in B_1^H(A'*B')$ and let $\rho(p): A'*B' \to A*B$ be a line of surgeries.  
Let $w(p) = \rho(p)(w)$.  We call $w(p)$ a {\em surgery family}.
One may think of a surgery family as an expression $a_1^{\alpha_1(p)}b_1^{\beta_1(p)}\cdots b_m^{\beta_m(p)}$
with $a_i \in A$ and $b_i \in B$,
where the exponents $\alpha_i$, $\beta_i$ are linear functions of $p$.

\begin{example}
The family $w(p) = aba^{-1}b^{-1}ab^{-p}a^{-1}b^p$ is a surgery family in $F_2$.
\end{example}

In a surgery family, the cones $V_A$ and $V_B$ vary, but the spaces $C_2(A)$ and $C_2(B)$ do
not. To understand the behavior of $\scl$ in surgery families we need one additional
ingredient; the following lemma is a restatement of (part of) \S~4.4 from \cite{CalegariSSS}:

\begin{lemma}\label{lem:surgery_matrix}
Let $w(p)$ be a surgery family, and let $M(p)$ be the integral matrix whose columns are
vectors spanning the extremal rays of $V_A(p)$. Then $M(p)=N+pN'$ where $N$ and $N'$ are fixed
integral matrices. In other words, the entries of $M(p)$ are integral linear functions of $p$.
\end{lemma}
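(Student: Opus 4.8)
The plan is to unwind the definition of $V_A(p)$ from \S~\ref{sss_section}, recognize it as a parametric family of polyhedral cones whose defining linear data is \emph{affine} in $p$, extract its extremal rays by a stabilization-plus-Cramer argument, and then invoke the specific structure of surgery families from \cite{CalegariSSS}, \S4.4, to upgrade ``polynomial in $p$'' to ``linear in $p$''. First I would record the setup. The cone $V_A(p)$ sits inside the \emph{fixed} vector space $C_2(A)$ and consists of the non-negative vectors killed by $\partial\colon C_2(A)\to C_1(A)$ and by $h_p\colon C_2(A)\to A\otimes\R$. The map $\partial$ is independent of $p$. The map $h_p$ sends a basis vector $(\tau,\tau')$ to $\tfrac12(h_p(\tau)+h_p(\tau'))$, where the arc-label $h_p(\tau)$ is the image of a fixed element of the source factor under the homomorphism $\rho(p)$, which for a surgery family is affine in $p$ with integral coefficients; hence each $h_p(\tau)$ is an integral affine function of $p$, and the matrix of $h_p$ has the form $H_0+pH'$ with $H_0,H'$ fixed integral matrices. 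Thus $V_A(p)$ is cut out by the $p$-independent inequalities $v\ge 0$ and equalities $\partial v=0$, together with the equality $(H_0+pH')v=0$ whose coefficients are integral affine in $p$. (This is precisely the hypothesis under which Theorem~\ref{thm:inthull} applies to the sail of $V_A(p)$, so establishing it is the whole point of the lemma.)

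Next I would argue that the combinatorial type of $V_A(p)$ stabilizes: for $p\gg 0$ each extremal ray of $V_A(p)$ is cut out by one and the same maximal collection $S$ of active coordinate hyperplanes $\{v_i=0\}$, together with the equality constraints. This is a parametric stabilization in the same spirit as Lemma~\ref{lem:convHull}: the determinantal sign conditions deciding which subsets of constraints actually define extremal rays of $V_A(p)$ are polynomial in $p$, hence eventually of constant sign, and after passing to a residue class of $p$ the relevant identities become honest polynomial ones. For each eventually-fixed extremal ray I would then restrict to the coordinates outside $S$, solve the resulting homogeneous system (which has one-dimensional solution space), read off a generator by Cramer's rule, and normalize to the primitive integral generator, the gcd computation being controlled by Lemma~\ref{lem:EA} (or Corollary~\ref{cor:HNF}). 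This exhibits the columns of $M(p)$ as vectors whose entries are polynomials in $p$ on each residue class.

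The hard part is upgrading ``polynomial in $p$'' to ``linear in $p$'', which \emph{fails} for an arbitrary affine-in-$p$ family of cones: a Cramer determinant all of whose rows come from $H_0+pH'$ only has degree bounded by $\operatorname{rank}(A)$, and that bound can be attained. The resolution is the structure of a line of surgeries analyzed in \cite{CalegariSSS}, \S4.4: the $p$-dependent part $pH'$ of the homological constraint is concentrated in a single direction — concretely, $H'$ has rank one, reflecting that a line of surgeries twists along one primitive slope — so that $V_A(p)$ is obtained from a \emph{fixed} cone by imposing exactly one additional scalar equation with affine-in-$p$ coefficients, and slicing a fixed polyhedron by one affine-in-$p$ hyperplane moves each of its extremal rays linearly in $p$. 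Equivalently, in the language of \cite{CalegariSSS}, each extremal ray of $V_A(p)$ is represented by a combinatorial surface piece whose associated vector is determined by its boundary arc-labels, and performing the surgery $\rho(p)$ Dehn-twists such a piece by an amount linear in $p$; hence $M(p)=N+pN'$ with $N,N'$ fixed integral matrices. Making this last reduction precise is the step I would expect to require the most care, but it is exactly what is imported from \cite{CalegariSSS}, and the present lemma is its restatement.
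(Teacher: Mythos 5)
The paper gives no proof of this lemma at all: it is explicitly introduced as ``a restatement of (part of) \S~4.4 from \cite{CalegariSSS}'' and is simply cited. So you are reconstructing an argument that the authors chose to import wholesale, and your last sentence acknowledges this correctly. With that caveat, a few comments on the reconstruction itself.

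Your setup is right: $V_A(p)$ sits in the fixed space $C_2(A)$, the boundary map $\partial$ is $p$-independent, and only $h_p$ moves, affinely in $p$ (one tiny slip: $h(\tau,\tau')=\tfrac12(h(\tau)+h(\tau'))$ is half-integral, so you should clear the factor of $2$ before calling $H_0,H'$ integral). The stabilization step and the Cramer's-rule extraction of generators are a sensible way to get that the entries of $M(p)$ are \emph{eventually polynomial} on each residue class, with degree bounded by $\mathrm{rank}(H')$. The observation that slicing a fixed cone by a single hyperplane with affine-in-$p$ coefficients produces extremal rays that are affine in $p$ is correct and is the right mechanism.

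The genuine gap is the claim that $H'$ has rank one. Nothing in the paper's definition of a line of surgeries forces this: $\rho_i(p)=\sigma_i+p\tau_i$ with $\tau_i\in\Hom(A_i,B_i)$, and when the free abelian factors have rank $\ge 2$, $\tau_i$ can have rank $\ge 2$, so the $p$-dependent part of $h_p$ can have rank $\ge 2$. In that case your Cramer determinants can genuinely have degree $>1$ in $p$, and the ``one slicing hyperplane'' reduction does not apply; your argument would only prove polynomiality, not linearity. (The rank-one picture is correct in the motivating rank-one case $G=F_2$, which is the example the paper displays, and may well be the setting of \cite{CalegariSSS}, \S4.4 --- but you are asserting it rather than deriving it.) The alternative ``each extremal ray is a combinatorial surface piece whose arc labels Dehn-twist linearly under $\rho(p)$'' is probably closer to what the cited reference actually does, and is a different (more geometric) argument from the Cramer one; as written it is a heuristic rather than a proof. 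So: a plausible reconstruction of the general shape, with the key step (linearity, not merely polynomiality) resting on an unverified structural claim about the surgery data that does not follow from the definitions given in this paper.
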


\subsection{Stable commutator length in families}

\begin{definition}
\label{def:QR}
A function $r(n)$ is {\em quasirational} if there are quasipolynomials $p(n)$ and $q(n)$ 
such that $r(n) = p(n)/q(n)$.  A family of polyhedra $S(n)$ in $\R^d$ is
$\QR$ if there is some integer $\pi$ so that for all integers $i$, the
vertices of $S(\pi n+i)$ are the columns of a matrix whose entries are rational
functions in the variable $n$, for $n\gg 0$.
\end{definition}

In what follows, let $\{w_i(p)\}_{i=1}^k$ be a finite set of surgery families
as above, so that each $w_i(p)$ is in $B_1^H(G)$ for some free product
of free abelian groups $G$. For each $p$, 
define $W_p$ to be the span of the $w_i(p)$ in $B_1^H(G)$. Let us assume that
the $w_i$ are linearly independent for $p\gg 0$, so that $W_p$ is $k$-dimensional. We
would like to express a relationship between the restriction of $\scl$ to
the subspaces $W_p$ as a function of $p$. To do this, we introduce some
dummy symbols $\{e_i\}_{i=1}^k$ spanning a vector space $E$, and
define $I_p:W_p \to E$ for each $p$ by $I_p(w_i(p))=e_i$.

\begin{theorem}
\label{thm:families}
Let $\{w_i(p)\}_{i=1}^k$ be a collection of surgery families, as above, and let 
$B_p$ be the unit ball in the $\scl$ norm restricted to $W_p$.  
Then $I_p(B_p)$ is eventually $\QR$ in $E$; i.e.\/ the unit $\scl$ balls are eventually 
$\QR$, modulo the identification 
of vector spaces.
\end{theorem}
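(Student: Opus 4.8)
The plan is to unwind the description of $\scl$ on a surgery family into the language of integer hulls, and then apply Theorem~\ref{thm:inthull} together with Lemma~\ref{lem:EA}. First I would recall from \S\ref{sss_section} that for each $p$ the restriction of $\scl$ to $W_p$ is computed by the optimization problem $\scl(C) = \min_{y\in d^{-1}(C)\cap Y_p} -\chi(y)/2$, where $\chi(v,v') = \kappa_{V_A(p)}(v) - |v|/2 + \kappa_{V_B(p)}(v') - |v'|/2$. By Lemma~\ref{lem:surgery_matrix} the extremal rays of the cones $V_A(p)$ and $V_B(p)$ are spanned by integral vectors depending \emph{linearly} on $p$, so these are exactly the kind of families to which Corollary~\ref{cor:coneHull} applies: the sail of $V_A(p)$ is the boundary of the convex hull of $D_A(p) + V_A(p)$, and $D_A(p)$ consists of the integer points in certain open faces whose extremal rays are linear in $p$. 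Hence for $p \gg 0$ the vertices of the sail form a $\QIQ$ family, and after passing to a cycle they have coordinates in $\Z[p]$. Consequently the Klein function $\kappa_{V_A(p)}$ is, for $p$ in a fixed residue class, the minimum of finitely many linear functionals whose coefficients are themselves ratios of polynomials in $p$ — i.e.\ the graph of $\chi_o$ on $V_A(p)$ is a piecewise-linear function with $\QR$ combinatorics.

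Next I would assemble these pieces into a single linear program with $\QR$ data. The cone $Y_p = (V_A(p)\times V_B(p)) \cap L$, where $L$ is a \emph{fixed} linear subspace (the gluing constraints depend only on the combinatorics of the $w_i$, not on $p$), has extremal rays that are linear in $p$ by Lemma~\ref{lem:surgery_matrix}; the map $d$ and the subspace $d^{-1}(C)$ for $C = I_p^{-1}(e)$ are likewise governed by matrices linear in $p$. The function $-\chi/2$ restricted to $Y_p$ is convex and piecewise rational-linear, with pieces and slopes that are $\QR$ in $p$ by the previous paragraph. For a fixed $e \in E$, $\scl(I_p^{-1}(e))$ is then the optimal value of a linear program (after subdividing $Y_p$ into the regions where $\chi$ is linear) whose constraint matrix and objective vector have entries that are rational functions of $p$ of the appropriate form; on each region the optimum is attained at a vertex of the feasible polytope, and by Corollary~\ref{cor:HNF} and Cramer's rule those vertices have coordinates that are eventually $\QR$ in $p$. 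Taking the minimum over the finitely many regions (whose number is eventually constant in $p$, by the stabilization arguments underlying Lemma~\ref{lem:convHull}) shows $\scl(I_p^{-1}(e))$ is eventually $\QR$ in $p$ for each fixed $e$ — in particular this recovers the Surgery Family Theorem (Cor.~\ref{cor:QR}) when $k=1$.

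To upgrade from pointwise to the full unit ball $B_p$, I would argue that the unit ball $I_p(B_p)$ in $E$ is a rational convex polytope (a genuine polytope when the $w_i$ are independent and $\scl$ is a norm on the relevant subspace, which holds since $G$ is free), and that its supporting hyperplanes are exactly the level sets $\{\scl = 1\}$ in directions dual to the vertices of the $\scl$ unit ball — equivalently, its vertices are the reciprocals (scaled along rays) of the values $\scl(I_p^{-1}(e))$ at the finitely many extremal directions. More carefully: the dual ball of $I_p(B_p)$ has vertices whose coordinates are determined by the same parametrized linear program, so those coordinates are eventually $\QR$ in $p$; dualizing a polytope whose vertices are $\QR$ produces a polytope whose facets, hence whose vertices, are again $\QR$, after passing to a cycle in which the combinatorial type of the ball is constant. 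Since there is only one such combinatorial type for each residue class mod $\pi$ for $p\gg 0$ (again by the stabilization of the decision tree in the underlying polyhedral algorithms), we conclude $I_p(B_p)$ is eventually $\QR$.

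The main obstacle, and the step I would spend the most care on, is controlling the combinatorics of the Klein function and of the optimal face of the linear program \emph{uniformly} in $p$: I need that the number of linear pieces of $\kappa_{V_A(p)}$, the adjacency structure of the sail, and the vertex of the LP achieving the optimum all stabilize for $p$ in a fixed residue class. The first two follow from applying Theorem~\ref{thm:inthull} and then invoking that a $\QIQ$ family has an eventually constant number of vertices and incidences after passing to a cycle; the last follows from the same kind of "eventually stabilizing decision tree" argument used in Lemma~\ref{lem:convHull}, applied now to the simplex method (or to QuickHull on the relevant dual polytope) with rational-function input. Once that uniformity is in hand, everything reduces to finitely many instances of Cramer's rule with $\QR$ entries, and the $\QR$ conclusion is immediate.
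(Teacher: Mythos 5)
Your core input data are exactly the paper's: Lemma~\ref{lem:surgery_matrix} gives that the extremal rays of $V_A(p)$, $V_B(p)$ are integral linear in $p$; Corollary~\ref{cor:coneHull} and Lemma~\ref{lem:convHull} then give that the sail is $\QIQ$, hence that $\kappa$, $\chi_o$, and $\chi$ are piecewise $\QR$-linear. That part is correct and matches the paper.

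Where you diverge is in how you assemble the conclusion, and here the paper's route is both cleaner and avoids some real gaps in yours. The paper does \emph{not} compute $\scl(I_p^{-1}(e))$ pointwise and then try to reconstruct the ball by duality. Instead it considers the sublevel set
\[
\chi_1 \;=\; \{\, v \in Y_p \;:\; -\chi(v)/2 \le 1 \,\}
\]
directly as a polyhedron in the ambient space $C_2(A)\times C_2(B)$. Since $\chi$ is piecewise linear with $\QR$ coefficients and $Y_p$ has $\QR$-linear facet data, the vertices and rays of $\chi_1$ are $\QR$. The $\scl$ unit ball in $W_p$ is exactly $d(\chi_1)$, and $I_p(B_p)$ is the further image under $I_p$ (whose matrix is rational in $p$ because $w_i(p)$ are linear in $p$). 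Projection of a polyhedron with $\QR$ vertices and rays by a $\QR$ linear map again has $\QR$ vertices and rays, via Lemma~\ref{lem:convHull}. Done. No duality, no Cramer's rule, no per-direction LP.

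Your detour through pointwise evaluation and the dual ball has two genuine soft spots. First, $\scl$ is in general only a \emph{pseudo}-norm, so $B_p$ (and its image $I_p(B_p)$) may be an unbounded polyhedron; the vertex--facet duality and the ``reciprocals of values at extremal directions'' picture you invoke require a bounded ball with the origin in its interior, which is an extra hypothesis (it is the hypothesis of Corollary~\ref{cor:convergent}, not of this theorem). Second, the assertion that ``the dual ball has vertices determined by the same parametrized linear program'' is doing a lot of work that you have not cashed out: you would need to show that the linear functionals cutting out the facets of $I_p(B_p)$ have $\QR$ coefficients, which amounts to re-proving (in the dual) exactly the statement you are trying to establish. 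The paper sidesteps all of this by staying in the primal: the unit ball is literally a projection of an explicit $\QR$ polyhedron.

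The uniformity concerns you flag at the end (stabilization of the combinatorics of the sail and of the decision tree) are legitimate, and the paper handles them exactly as you suggest --- the number of extremal rays of $V_A(p)$, $V_B(p)$ and the faces contributing to $D_A$, $D_B$ are eventually constant, and Lemma~\ref{lem:convHull} supplies the eventual stabilization of the hull combinatorics. So your instinct about where the care is needed is right; it is the assembly strategy (pointwise plus duality versus direct sublevel-polyhedron projection) where your proposal is both heavier and, as written, incomplete.
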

\begin{proof}
We are going to describe a modified algorithm which produces the $\scl$ unit ball in the positive 
orthant in $W_p$. Since $-C = C^{-1}$ in $B_1^H$, this is sufficient, since to obtain the full 
unit ball we simply run this algorithm $2^k$ times to get each orthant.  
After describing the new algorithm, we will show that it produces output which is $\QR$ in $E$.

For simplicity, we give the algorithm in the case of a free product of two free abelian groups; 
the generalization to more free factors is straightforward. Let $\Gamma$ be as above, so that the 
homotopy class of $\Gamma$ corresponds to the formal sum of
conjugacy classes in the support of $\sum_i w_i(p)$. Then $H_1(\Gamma)$ is free abelian, 
with rank equal to the sum of the number of terms in each $w_i(p)$ 
(in particular, this rank does not depend on $p$).

For each $p$ we construct $V_A$ and $V_B$ as cones in $C_2(A)$ and $C_2(B)$, and construct $Y$ as a 
subcone of $V_A \times V_B$ in $C_2(A) \times C_2(B)$. Note that $C_2(A)$ and $C_2(B)$ do not depend on $p$. 
Moreover, although $V_A$ and $V_B$ do depend on $p$, the cone $Y$ is obtained by intersecting their product 
with a fixed subspace of $C_2(A) \times C_2(B)$.

There is a map $H_1(\Gamma) \to H_1(K)$ given by inclusion, and there is
a sequence
$$Y \xrightarrow{d} H_1(\Gamma) \to H_1(K)$$
which is exact at $H_1(\Gamma)$ in the sense that the image of $Y$ is equal to the cone in the
kernel of $H_1(\Gamma) \to H_1(K)$ consisting of non-negative multiples of the components
of $\Gamma$. This kernel contains the span of the $w_i$ by hypothesis. Denote this span by $W_p$,
and define $Y_p$ to be the preimage of this span; i.e.\/ $Y_p = d^{-1}(W_p)$.

The function $-\chi/2$ is piecewise linear on $Y$ for each $p$, and therefore the set
$\chi_1 = \{v \in Y_p \, | \, -\chi(v)/2 \le 1\}$ is a polyhedron. Therefore the projection
$d(\chi_1)$ is a polyhedron, which is precisely the $\scl$ unit ball.

We now need to show that the image of this unit ball is $\QR$ in $E$. 
For $n\gg 0$ the combinatorics of the cones $V_A$ and $V_B$ is eventually constant, and the
set of faces that contribute vectors to $D_A$ or $D_B$ is also eventually constant.
By Lemma~\ref{lem:surgery_matrix} each face of $V_A$ has spanning vectors which are integral
linear functions of $p$, so the vertices of the sail are $\QIQ$ by Corollary~\ref{cor:coneHull} 
and Lemma~\ref{lem:convHull}. Therefore the linear functions defining $\kappa_A$ and $\kappa_B$
are quasirational functions of $p$, so the same is true for the function $\chi$ on
$Y_p$, so the vertices and rays in $\chi_1$ are $\QR$. The projection by $d$ composed with
$I_p$ is therefore $\QR$ in $E$.
\end{proof}

\begin{corollary}\label{cor:QR}
If $w(n)$ is a surgery family, then $\scl(w(n))$ is eventually a ratio of two quasipolynomials in $n$.
\end{corollary}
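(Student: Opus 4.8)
The plan is to derive Corollary~\ref{cor:QR} as the one-dimensional instance of Theorem~\ref{thm:families}. Apply that theorem to the single surgery family $w_1(p):=w(p)$, so $k=1$ and $W_p$ is the line spanned by $w(p)$ in $B_1^H(G)$, with the identification $I_p:W_p\to E\cong\R$ sending $w(p)\mapsto e_1$. The unit ball $B_p$ in the $\scl$ norm restricted to $W_p$ is the symmetric interval $[-1/\scl(w(p)),\,1/\scl(w(p))]\cdot w(p)$ (using that $\scl$ is a genuine norm here, or more carefully a pseudo-norm whose restriction to this line is nonzero for $p\gg 0$; if $\scl(w(p))$ is eventually zero the statement is trivial). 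Hence $I_p(B_p)$ is the interval $[-1/\scl(w(p)),1/\scl(w(p))]\subset E$, whose two vertices are $\pm 1/\scl(w(p))$.

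By Theorem~\ref{thm:families}, $I_p(B_p)$ is eventually $\QR$, meaning its vertices are rational functions of $p$ on each residue class mod some $\pi$, i.e.\/ they are quasirational in the sense of Definition~\ref{def:QR}. Therefore $1/\scl(w(p))$ is quasirational: there are quasipolynomials $P(p),Q(p)$ with $1/\scl(w(p))=P(p)/Q(p)$ for $p\gg 0$. Inverting, $\scl(w(p))=Q(p)/P(p)$ for $p\gg 0$, which is exactly a ratio of two quasipolynomials. That completes the argument.

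I expect the only real subtlety to be bookkeeping: confirming that the single-family case of Theorem~\ref{thm:families} literally identifies the interval endpoint with $1/\scl(w(p))$ (rather than $\scl(w(p))$ itself), and handling the degenerate possibilities ($\scl(w(p))=0$, or $w(p)=0$ in $B_1^H$, for infinitely many $p$) by passing to a cycle so that on each residue class mod $\pi$ either $w(p)$ represents the zero class (and $\scl=0$, trivially a ratio of quasipolynomials) or it does not (and the interval is a genuine nondegenerate segment, so the reciprocal manipulation is valid). None of this requires new ideas beyond Theorem~\ref{thm:families}; the corollary is essentially just unwinding the $k=1$ case.
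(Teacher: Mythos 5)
Your argument is correct and is exactly the route the paper takes: the paper's proof is the single line ``Apply Theorem~\ref{thm:families} to the case of a single surgery family,'' and your proposal carefully unwinds that reduction, identifying the vertices of the one-dimensional unit ball with $\pm 1/\scl(w(p))$ and inverting. The extra care you take with the degenerate cases ($\scl(w(p))=0$ or $w(p)$ trivial in $B_1^H$) is sensible but not spelled out in the paper.
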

\begin{proof}
Apply Theorem \ref{thm:families} to the case of a single surgery family.
\end{proof}

\begin{corollary}
\label{cor:convergent}
If $\scl(w_i(p))$ is bounded below (uniformly in $p$) for all $i$, then 
the unit $\scl$ ball in $E$ is quasi-convergent, in the sense that there 
is some integer $\pi$ such that the $\pi$ unit $\scl$ balls in $E$ which are 
the images of the unit balls in $W_{q\pi+i}$ for
each residue $i$ mod $\pi$ converge as $q\to\infty$.
\end{corollary}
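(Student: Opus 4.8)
The plan is to deduce this directly from Theorem~\ref{thm:families} together with an elementary observation about quasirational families whose vertices stay in a bounded region. By Theorem~\ref{thm:families}, the family of unit balls $I_p(B_p)$ in $E$ is eventually $\QR$: there is some period $\pi$ so that, after passing to each residue class $i$ mod $\pi$, the vertices of $I_{q\pi+i}(B_{q\pi+i})$ are the columns of a matrix whose entries are rational functions of $q$, and the \emph{number} of vertices is constant in each residue class for $q\gg 0$. So it suffices to show that, within a single residue class, a $\QR$ family of polytopes whose vertices remain uniformly bounded actually converges in the Hausdorff topology as $q\to\infty$.

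First I would record why the vertices stay bounded. Each vertex $v$ of $I_p(B_p)$ lies on a ray through the origin on which $\scl$ is linear; writing $v = \lambda e$ for a unit vector $e$ in the direction of that ray, the condition $\scl(v)=1$ forces $\lambda = 1/\scl(e)$. By hypothesis $\scl(w_i(p))$ is bounded below uniformly in $p$, and since $\scl$ is a norm on $W_p$ (in the relevant hyperbolic/free setting) and the $w_i(p)$ are its $I_p$-images of a fixed basis $e_i$ of $E$, a uniform lower bound on $\scl(w_i(p))$ propagates to a uniform lower bound on $\scl$ along every direction of $E$ realized by a vertex — a vertex direction is a nonnegative combination of the $e_i$ in the positive orthant, and $\scl$ there is bounded below by the minimum of the $\scl(w_i(p))$ times a combinatorial constant. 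Hence each $\lambda$ is bounded above, uniformly in $p$, so all vertices of all $I_p(B_p)$ lie in a fixed ball in $E$.

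Next, fix a residue class $i$ mod $\pi$ and consider the $k'$ vertex functions $v_1(q),\dots,v_{k'}(q)$, each a vector of rational functions of $q$. A bounded rational function of $q$ has a finite limit as $q\to\infty$ (the degree of the numerator cannot exceed that of the denominator, else it would be unbounded). Therefore each $v_j(q)$ converges to a limit vertex $v_j^\infty \in E$ as $q\to\infty$. The convex hull is continuous with respect to its vertex set in the Hausdorff metric, so $I_{q\pi+i}(B_{q\pi+i}) = \mathrm{conv}\{v_j(q)\}$ converges to $\mathrm{conv}\{v_j^\infty\}$ as $q\to\infty$. This gives the desired quasiconvergence: the $\pi$ subsequences indexed by the residues mod $\pi$ each converge, which is exactly the assertion of the corollary.

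The main obstacle is the boundedness step — translating the uniform lower bound on $\scl(w_i(p))$ into a uniform upper bound on all vertices of $I_p(B_p)$, rather than just on the coordinates in the $e_i$ directions. One must be a little careful that a vertex of the unit ball need not lie on a coordinate axis, so one genuinely uses that $\scl$ restricted to the positive orthant of $E$ is bounded below by a positive constant whenever each $\scl(w_i(p))$ is; this follows because on the simplex spanned by the $e_i$ the function $\scl$ is a norm, hence bounded below by its minimum over that compact simplex, and that minimum is controlled by the values $\scl(w_i(p))$ together with the fixed combinatorics. Once boundedness is in hand, the convergence of bounded rational functions is routine, and the rest is the standard continuity of $\mathrm{conv}(\cdot)$.
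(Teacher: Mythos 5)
Your proposal follows essentially the same route as the paper: invoke Theorem~\ref{thm:families} to get $\QR$ vertices, pass to a cycle so the vertex coordinates become honest rational functions of $q$, argue that the hypothesis on $\scl(w_i(p))$ forces those rational functions to be bounded, and conclude that bounded rational functions converge (plus a final appeal to continuity of $\mathrm{conv}$, which the paper leaves implicit). The one place you diverge is the boundedness step, which the paper asserts in a single sentence (``As $\scl$ is uniformly bounded below, all these vertices must be bounded rational functions'') and you attempt to unpack. You are right to flag this as the crux, but your justification is not yet airtight: the claim that a lower bound at the vertices $e_i$ of the positive-orthant simplex propagates to a lower bound over the whole simplex ``by a combinatorial constant'' fails for a general norm, since the basis vectors $w_i(p)$ could degenerate (become nearly linearly dependent in the $\scl$ geometry) as $p\to\infty$ even while each $\scl(w_i(p))$ stays bounded below. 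Making this rigorous would require identifying and using some extra structure of $\scl$ on nonnegative combinations in $B_1^H$, or else strengthening the hypothesis to a lower bound on the whole unit sphere of $W_p$. To be fair, the paper's own proof supplies no more detail at this point than your sketch does, so the two arguments are genuinely the same approach; you have simply made visible a gap that the paper treats as immediate.
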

\begin{proof}
The vertices of the unit $\scl$ ball are $\QR$, so after passing to a 
finite cycle (and then to a further finite cycle), we assume the vertices 
are rational functions.  As $\scl$ is uniformly bounded below, all these vertices 
must be bounded rational functions, which necessarily converge.
\end{proof}

\begin{figure}[htpb]
\centering
\begin{minipage}[b]{0.4\linewidth}
\centering
\includegraphics*[scale=0.19]{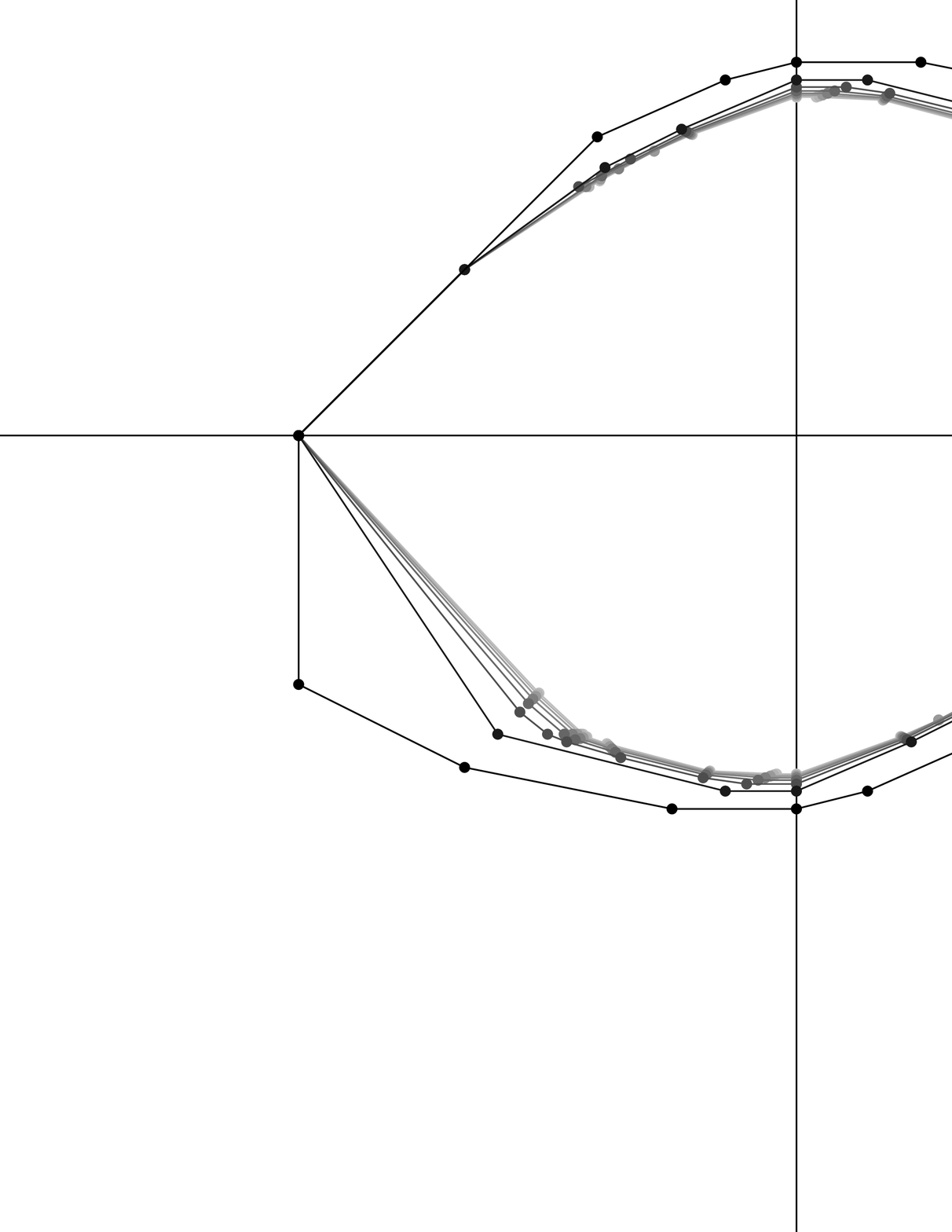}
\end{minipage}
\begin{minipage}[b]{0.59\linewidth}
\centering
\includegraphics*[scale=0.73]{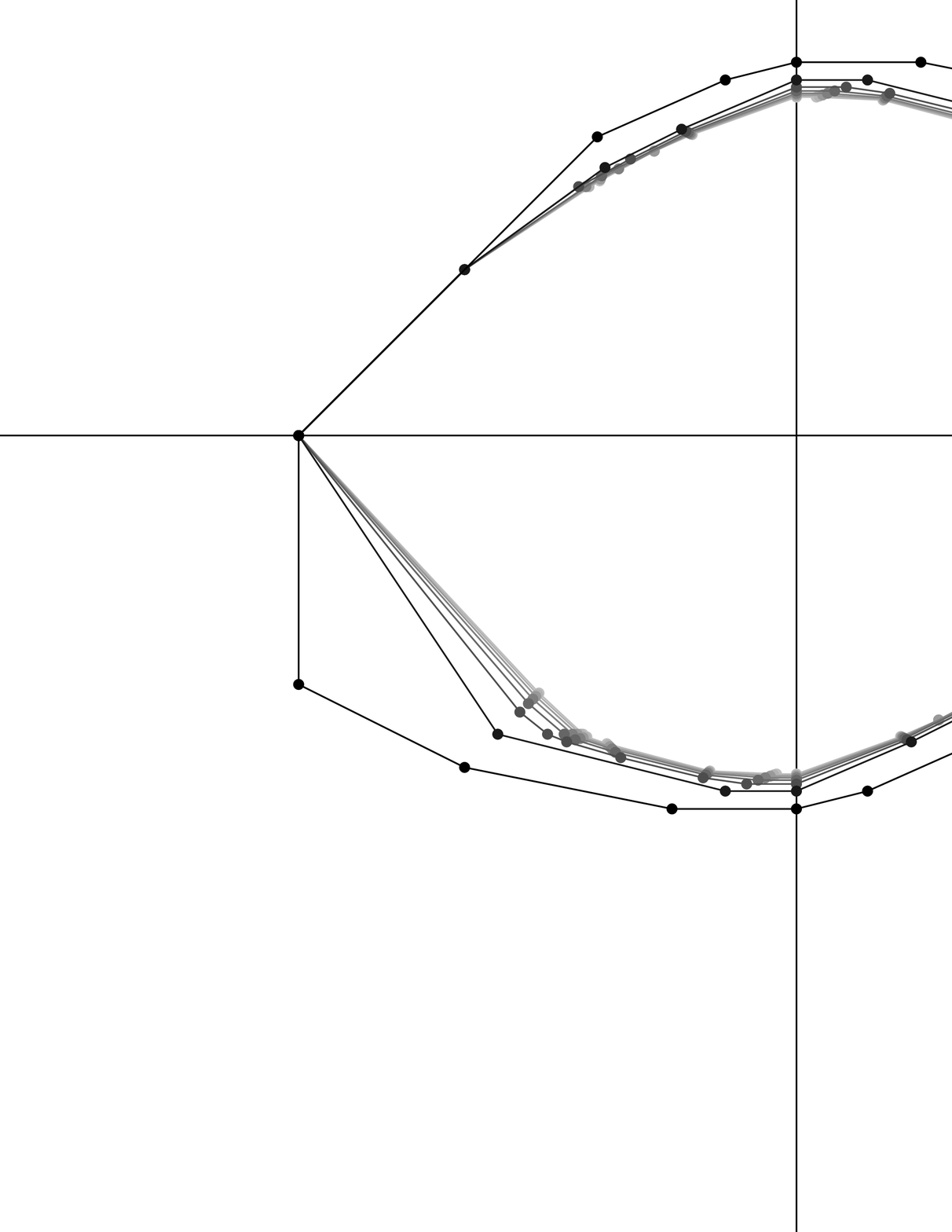}

\end{minipage}
\caption{The $\scl$ unit balls in the 2-dimensional subspaces of $B_1^H$ spanned by 
$ab^{-1}a^{-1}baba^{-1}b^{-1}$ and $aba^{-n-1}b^{-2}+a^nb$ are superimposed on one another,
for $1 \le n \le 8$. On the right is a zoomed view, showing the convergence (and the finiteness) 
of the vertices.}
\label{fig:balls}
\end{figure}

Figure \ref{fig:balls} illustrates Corollary~\ref{cor:convergent} in a particular example.  
Note the number of vertices is constant (after approximately $n=3$), and they are 
converging.  The $\scl$ unit balls were computed with the program {\tt scabble} \cite{scabble}.

\section{Acknowledgements}

Danny Calegari was supported by NSF grant DMS 1005246. We would like to thank Jesus De Loera 
and Greg Kuperberg for some useful conversations about this material.  We would also 
like to thank the anonymous referees for helpful suggestions and corrections.

\bibliography{biblio}
\bibliographystyle{plain}

\end{document}